\let\csname equation*\endcsname\relax
\let\csname endequation*\endcsname\relax
\newtheorem{theorem}{Theorem}[section]
\newtheorem{lemma}[theorem]{Lemma}
\newtheorem{remark}[theorem]{Remark}
\newcommand{\mbbR}{\mathbb{R}}
\DeclareMathOperator*{\argmin}{arg\, min}
\begin{document}

\title[Projected Newton Method]{Projected Newton Method for noise constrained Tikhonov regularization}

\author{J Cornelis, N Schenkels \& W Vanroose}

\address{Applied Mathematics Group, Department of Mathematics and Computer Science, University of Antwerp, Building G, Middelheimlaan 1, 2020 Antwerp, Belgium}
\ead{jeffrey.cornelis@uantwerp.be}
\vspace{10pt}
\begin{indented}
\item[]\today
\end{indented}

\begin{abstract}
Tikhonov regularization is a popular approach to obtain a meaningful solution for ill-conditioned linear least squares problems. A relatively simple way of choosing a good regularization parameter is given by Morozov's discrepancy principle. However, most approaches require the solution of the Tikhonov problem for many different values of the regularization parameter, which is computationally demanding for large scale problems. We propose a new and efficient algorithm which simultaneously solves the Tikhonov problem and finds the corresponding regularization parameter such that the discrepancy principle is satisfied. We achieve this by formulating the problem as a nonlinear system of equations and solving this system using a line search method. We obtain a good search direction by projecting the problem onto a low dimensional Krylov subspace and computing the Newton direction for the projected problem. This \textit{projected Newton direction}, which is significantly less computationally expensive to calculate than the true Newton direction, is then combined with a backtracking line search to obtain a globally convergent algorithm, which we refer to as \textit{the Projected Newton method}. We prove convergence of the algorithm and illustrate the improved performance over current state-of-the-art solvers with some numerical experiments. 

\end{abstract}

%
\vspace{2pc}
\noindent{\it Keywords}: Newton method, Krylov subspace, Tikhonov regularization, discrepancy principle, bidiagonalization


\section{Introduction}
Large scale ill-posed linear inverse problems of the form
$Ax = b$ with $A\in\mathbb{R}^{m\times n}, x \in \mathbb{R}^n$ and $b\in \mathbb{R}^m$ where $m\geq n$ arise in countless scientific and industrial applications. The singular values of such a matrix typically decay to zero, which means the condition number $\kappa(A)$ is very large. This in turn means that small perturbations in the right-hand side $b$ can cause huge changes in the solution $x$. The right-hand side $b$ is generally the
perturbed version of the unknown exact measurements or observations $b_{ex} = b + e$. Thus we know that for ill-posed problems some form of
regularization has to be used in order to deal with the noise $e$ in the data $b$ and to
find a good approximation for the true solution $x_{ex}$ of $Ax = b_{ex}$ . One of the most widely
used methods to do so is Tikhonov regularization \cite{tikhonov1977solutions}. In its standard from, the Tikhonov
solution to the inverse problem is given by
\begin{equation}\label{eq:min_tikhonov}
x_{\alpha} = \argmin_{x\in\mathbb{R}^n} \frac{1}{2}||Ax-b||^2 + \frac{\alpha}{2} ||x||^2
\end{equation}
where $\alpha > 0$ is the regularization parameter and $||\cdot||$ is the standard Euclidean norm. By taking the gradient of the objective function of \cref{eq:min_tikhonov} and equating it to zero, it follows that the Tikhonov solution can alternatively be characterized as the solution of the linear system of normal equations
\begin{equation}\label{eq:lin_tikhonov}
\left(A^T A + \alpha I\right) x_{\alpha} =  A^T b
\end{equation}
where $I$ is the $n\times n$ identity matrix \cite{hansen2010}. The choice of the regularization parameter is very important since its value has
a significant impact on the quality of the reconstruction. If, on the one hand, $\alpha$ is chosen too large,
focus lies on minimizing the regularization term $||x||^2$ in \cref{eq:min_tikhonov}. The corresponding reconstruction $x_{\alpha}$ will therefore no longer be a good solution for the linear system $Ax = b$, will
typically have lost many details and be what is referred to as ``oversmoothed''. If, on
the other hand,  $\alpha$ is chosen too small, focus lies on minimizing the residual $||Ax-b||^2 $.
This, however, means that the measurement errors $e$ are not suppressed and that the reconstruction
$x_{\alpha}$ will be ``overfitted'' to the measurements. 

One way of choosing the regularization parameter is the L-curve method. If $x_{\alpha}$ is
the solution of the Tikhonov problem \cref{eq:min_tikhonov}, then the curve $(\log ||Ax_\alpha-b||, \log||x_\alpha||)$ typically has a rough ``L'' shape, see \cref{fig:curves}. Heuristically, the value for the regularization
parameter corresponding to the corner of this ``L'' has been proposed as a good regularization parameter because it balances model fidelity (minimizing the residual)
and regularizing the solution (minimizing the regularization term) \cite{calvetti1999, hansen1992, hansen1993, hansen2010}. The
problem with this method is that in order to find this value, the linear system \cref{eq:lin_tikhonov} has
to be solved for many different values of $\alpha$, which can be computationally expensive
and inefficient for large scale problems. Moreover, it is a heuristic parameter choice method, meaning that it does not yield convergence to $x_{ex}$ when the noise $e$ in the data tends to zero, so one should be cautious when using it \cite{engl1996regularization}. However, the strength of the parameter choice method lies in the fact that it does not require any knowledge of the size of the noise $e$. 

\begin{figure}
		\centering
		\begin{tikzpicture}[line cap = round, line join = round, > = triangle 45, scale = 0.65]
		\hspace{-0.4cm}
		\begin{scope}[shift = {(-7, 0)}]
				\draw[->] (-1, 0) -- (5, 0);
				\draw[->] (0, -1) -- (0, 5);
				
				 L-curve:
				\draw [shift = {(0, 3)}, thick]  plot[domain = 0:1.07,variable = \t] ({cos(\t r)},{sin(\t r)});
				\draw [shift = {(3, 0)}, thick]  plot[domain = 0.5:1.57,variable = \t] ({cos(\t r)},{sin(\t r)});
				\draw [shift = {(1.5, 1.5)}, thick]  plot[domain = 3.14:4.71,variable = \t] ({cos(\t r)/2},{sin(\t r)/2});
				\draw [thick] (1, 3) -- (1, 1.5);
				\draw [thick] (1.5, 1) -- (3, 1);
				
				\draw [dashed, thick, color = red] (1.75, -0.25) -- (1.75, 3.25);
				\node [anchor = east] at (1.6, 0.35) {\color{red} $\eta\varepsilon$};
				
				\draw [shift = {(0, 3)}] plot[domain = 0.32:1.25, variable = \t] ({1*1.58*cos(\t r)+0*1.58*sin(\t r)},{0*1.58*cos(\t r)+1*1.58*sin(\t r)});
				\draw [shift = {(3, 0)}] plot[domain = 0.32:1.25, variable  =\t] ({1*1.58*cos(\t r)+0*1.58*sin(\t r)},{0*1.58*cos(\t r)+1*1.58*sin(\t r)});
				\draw [->] (0.5, 4.5) -- (0.29, 4.57);
				\draw [->] (4.5, 0.5) -- (4.57, 0.3);
				
				\node [label = below:$\log \left\|Ax_\alpha - b\right\|$] at (2.5, -0.25) {};
				\node [label = left:\rotatebox{90}{$\log \left\|x_\alpha\right\|$}] at (-0.25, 2.5) {};
				\node [font = \tiny, label = right:$0\leftarrow\alpha$] at (0.75, 4.5) {};
				\node [font = \tiny, label = right:$\alpha\rightarrow+\infty$] at (4, 1.25) {};
				
				
				\draw[->] (2.75, 2.25) -- (1.8, 1.1);
				\node [align = center] at (4, 2.75) {\footnotesize $\alpha$ for discrepancy\\ \footnotesize principle};
				\draw[->] (-1.5, -0.2) -- (1.1, 1.1);
				\node [align = center] at (-1.5, -1) {\footnotesize $\alpha$ for L-curve\\ \footnotesize method};
		\end{scope}
		\hspace{0.7cm}
		\begin{scope}[shift = {(1, 0)}]
				\draw[->] (-1, 0) -- (7, 0);
				\draw[->] (0, -1) -- (0, 5);
				
				\draw [thick] (0, 0.25) .. controls (2, 0.25) and (4, 2) .. (6, 4);
				
				\draw [dashed, thick, color = red] (-0.1, 2) -- (7, 2);
				\node [anchor = north] at (7, 1.9) {\color{red} $\eta\varepsilon$};
				
				\node [anchor = north] at (3.5, 0) {$\alpha$};
				\node [anchor = east] at (-0.1, 2.5) {\rotatebox{90}{$\left\|Ax_\alpha - b\right\|$}};
				
				\draw[->] (3.3, 3.1) -- (3.8, 2.1);
				\node [align = center] at (2.75, 3.75) {\footnotesize $\alpha$ for discrepancy\\  \footnotesize principle};
				
				\draw [thick, decorate, decoration={brace, mirror}] (0, -0.6) -- (3.7, -0.6)
						node [midway, yshift = -10] {\footnotesize Overfitting};
				\draw [thick, decorate, decoration={brace, mirror}] (3.8, -0.6) -- (7, -0.6)
						node [midway, yshift = -10] {\footnotesize Oversmoothing};
		\end{scope}
\end{tikzpicture}
		\caption{Sketch of the L-curve (left) and the D-curve (right). The value
		for $\alpha$ proposed by the L-curve method is typically slightly larger
		than the one proposed by the discrepancy principle \cite{hansen1992}.}
		\label{fig:curves}
\end{figure}
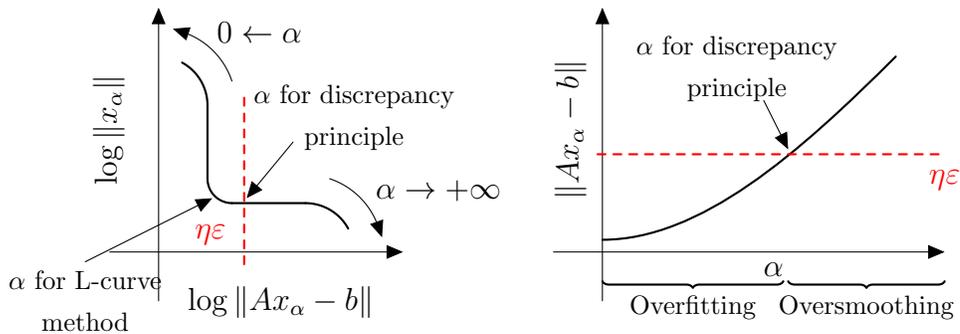

Another way of choosing the regularization parameter is Morozov's discrepancy
principle \cite{morozov1984}. Here, the regularization parameter is chosen such that
\begin{equation}\label{eq:discrepancy_principle}
||Ax_\alpha-b|| = \eta \epsilon 
\end{equation}
with $\epsilon = ||e||$ the size of the measurement error and $1 \leq \eta$ a tolerance value. The idea behind this
choice is that finding a solution $x_\alpha$ with a lower residual can only lead to overfitting.
Similarly to the L-curve, we can look at the curve $(\alpha,||Ax_\alpha - b||)$, which is often referred to as the discrepancy curve or D-curve, see \cref{fig:curves}. Note that in practice we never have the exact value $\epsilon$, so this approach assumes we have a good estimate for the ``noise-level'' of the inverse problem. We could use a root-finding method like the secant or regula falsi method \cite{allen2011numerical,burden2000numerical} to find the regularization parameter $\alpha$ which satisfies \cref{eq:discrepancy_principle}, but this would again require us to solve the linear system \cref{eq:lin_tikhonov} multiple times.

In this paper we develop a new and efficient algorithm, which we call \textit{the Projected Newton method}, that simultaneously updates the
solution $x$ and the regularization parameter $\alpha$ such that the Tikhonov
equation \cref{eq:lin_tikhonov} and Morozov's discrepancy principle \cref{eq:discrepancy_principle}
are both satisfied. We start by considering an equivalent formulation as a constrained optimization problem, which we refer to as \textit{the noise constrained Tikhonov problem}. By taking the gradient of the Lagrangian of the optimization problem, this formulation naturally leads to a nonlinear system of equations that can be solved using a Newton type method, which is precisely the approach taken in \cite{landi2008lagrange}. For large scale problems, however, computing the Newton direction is computationally demanding. We therefore propose to project the noise constrained Tikhonov problem onto a low dimensional Krylov subspace. In each iteration of the Projected Newton method we first expand the Krylov subspace with one dimension and then compute the Newton direction for the projected problem, which is much cheaper to compute than the actual Newton direction. 
This \textit{projected Newton direction} is then combined with a backtracking line search to obtain a robust and globally convergent algorithm. 

The rest of the paper is organized as follows. \Cref{sec:newtons_method} introduces the noise constrained Tikhonov problem and reviews some basic properties of Newton's method and the approach taken in \cite{landi2008lagrange}. The main novel contribution of this work is presented in \cref{sec:projected_newton_method}, where we derive the Projected Newton method. In \cref{sec:proofconv} a convergence result is formulated and proven for the new algorithm. Experimental
results, as well as a reference method \cite{gazzola2014_2}, can be found in \cref{sec:numex}. Lastly, this work is concluded and possible future research directions are proposed in \cref{sec:conclusion}.

\section{Newton's method for noise constrained Tikhonov regularization} \label{sec:newtons_method}

Let us consider the following equality constrained optimization problem:
\begin{equation} \label{eq:noise_constrained}
\min_{x\in\mathbb{R}^n} \hspace{0.2cm} \frac{1}{2}||x||^2 \hspace{0.5cm} \text{subject to} \hspace{0.5cm}  \frac{1}{2}||Ax-b||^2 = \frac{\sigma^2}{2}
\end{equation}
where we denote $\sigma = \eta\varepsilon$ the value used in the discrepancy principle, see \cref{eq:discrepancy_principle}. The Lagrangian of \cref{eq:noise_constrained} is given by
\begin{equation} \label{eq:lagrangian}
\mathcal{L}(x,\lambda) = \frac{1}{2}||x||^2 + \lambda \left(  \frac{1}{2}||Ax-b||^2 - \frac{\sigma^2}{2} \right)
\end{equation}
where $\lambda \in \mathbb{R}$ is the Lagrange multiplier. It follows from classical constrained optimization theory that if $x^{*}$ is a solution of \cref{eq:noise_constrained} then there exists a Lagrange multiplier $\lambda^{*}$ such that $F(x^{*},\lambda^{*}) = 0$ with
\begin{equation} \label{eq:F}
F(x,\lambda) = \begin{pmatrix}
 \lambda A^T (Ax - b) + x \\
\frac{1}{2} ||Ax-b||^2 - \frac{\sigma^2}{2} 
\end{pmatrix}.
\end{equation}
The first component of this function, which is nonlinear jointly in $x$ and $\lambda$, is the gradient of the Lagrangian with respect to $x$ , i.e. $\nabla_x \mathcal{L}(x,\lambda) $, while the second component is simply the constraint. These are in fact the first order optimality conditions, also known as Karush-Kuhn-Tucker or KKT conditions \cite{nocedal2006numerical}. Note that any point $(x,\lambda)$ with $\lambda > 0$ that is a root of the first component of \cref{eq:F} is a Tikhonov solution \cref{eq:lin_tikhonov} with $\alpha = 1/\lambda$:
\begin{eqnarray} 
\label{eq:transf}
\lambda A^T (Ax - b) + x = 0 &\Leftrightarrow&  A^T (Ax - b) + \frac{1}{\lambda} x = 0 \nonumber\\
&\Leftrightarrow& A^T (Ax - b) +\alpha x = 0 \\ 
&\Leftrightarrow& (A^T A + \alpha I )x = A^T b.\nonumber
\end{eqnarray} 
It is a well known result that \cref{eq:noise_constrained} has a unique solution $x^{*}$ and that the corresponding Lagrange multiplier $\lambda^{*}$ is strictly positive \cite{engl1996regularization}. Now if we write $\alpha^* = 1/\lambda^*$ it follows from the discussion above that $(x^{*},\alpha^{*})$ satisfies the Tikhonov equations \cref{eq:lin_tikhonov} as well as the discrepancy principle \cref{eq:discrepancy_principle}. This means that if we solve \cref{eq:noise_constrained}, we have simultaneously found the regularization parameter and corresponding Tikhonov solution that satisfies the discrepancy principle. Henceforth we refer to \cref{eq:noise_constrained} as \textit{the noise constrained Tikhonov problem}. 

A modification of Newton's method to solve the nonlinear system of equations $F(x,\lambda) = 0$ is presented in \cite{landi2008lagrange}, which the author refers to as \textit{the Lagrange Method} since it is based on the Lagrangian \cref{eq:lagrangian}. The Newton direction for this particular problem starting from a point $(x,\lambda)$ is defined as the solution of the linear system
\begin{equation}\label{eq:newton_system}
J(x,\lambda)\begin{pmatrix} \Delta x \\ \Delta \lambda \end{pmatrix} = - F(x,\lambda)
\end{equation}
where $J(x,\lambda) \in \mathbb{R}^{(n+1)\times (n+1)}$ is the Jacobian matrix of $F(x,\lambda)$ and is given by 
\begin{equation} \label{eq:jacf}
J(x,\lambda) =\begin{pmatrix} \lambda A^T A + I &  A^T(Ax - b) \\ (Ax-b)^T A & 0 \end{pmatrix}. 
\end{equation}
We can express the determinant of this matrix as
\begin{equation} \label{eq:determinant}
\det J(x,\lambda) = -(Ax-b)^T A(\lambda A^T A + I)^{-1} A^T(Ax - b) \det(\lambda A^TA + I). 
\end{equation} 
due to the $2\times 2$ block structure of the matrix \cite{zhang2006schur}. Thus we know that $J(x,\lambda)$ is nonsingular when $\lambda\geq0$ and $A^T(Ax - b) \neq 0$. Indeed, this follows from the fact that both $\lambda A^T A + I$ and $(\lambda A^T A + I)^{-1}$ are positive definite and hence $\det J(x,\lambda) < 0$.

It is well known that the Newton direction is a descent direction for the merit function $f(x,\lambda) = \frac{1}{2} ||F(x,\lambda)||^2$, which can be seen from the following calculation
\begin{eqnarray*}
\nabla f(x,\lambda)^T \begin{pmatrix}\Delta x \\ \Delta \lambda \end{pmatrix} &=& \left(J(x,\lambda)^T F(x,\lambda)\right)^T  \left(- J(x,\lambda)^{-1}F(x,\lambda)\right) \\
&=& - ||F(x,\lambda)||^2 \leq 0.
\end{eqnarray*} 
Now it follows from Taylor's theorem \cite{nocedal2006numerical} that starting from the point $(x,\lambda)$, we can either find a step-length $\gamma > 0$ such that $f(x + \gamma \Delta x, \lambda + \gamma \Delta \lambda) < f(x, \lambda)$ or we have found the solution to $F(x,\lambda) = 0$. In \cite{landi2008lagrange} a different merit function is chosen, namely 
\begin{equation} \label{eq:merit}
m(x,\lambda) = \frac{1}{2}|| \lambda A^T(Ax - b) + x ||^2 + \frac{w}{2} \left(\frac{1}{2} ||Ax-b||^2 - \frac{\sigma^2}{2}  \right)^2
\end{equation} 
 with $w\in\mathbb{R}$ a fixed weight. Note that for $w = 1$ this merit function coincides with the usual merit function $f(x,\lambda)$. The Lagrange method calculates in each iteration an approximate solution $\tilde{\Delta}$ to \cref{eq:newton_system} using the Krylov subspace method MINRES \cite{paige1975solution} and then chooses the step-length $\gamma$ such that there is a sufficient decrease of the merit function $m(x,\lambda)$, i.e. 
\begin{equation*}
m(x + \gamma \tilde{\Delta} x, \lambda + \gamma \tilde{\Delta} \lambda)  < m(x,\lambda) + c \gamma \nabla m(x,\lambda)^T \tilde{\Delta} 
\end{equation*} 
with $c\in (0,1)$ and such that $\lambda> 0 $ and $A^T(Ax - b) \neq 0$ by means of a backtracking line search. Furthermore it is shown that by choosing a specific tolerance $\xi$ for the relative residual $||J(x,\lambda)\tilde{\Delta} + F(x,\lambda)|| \leq \xi ||F(x,\lambda)|| $ such a step-length $\gamma$ can always be found and that the algorithm converges to the unique solution of the noise constrained Tikhonov problem \cref{eq:noise_constrained}. Even when solving \cref{eq:newton_system} to a certain tolerance with a Krylov subspace method, calculating the Newton direction is computationally expensive for large scale problems since each MINRES iteration requires a matrix-vector product with $A$ and $A^T$.  

\begin{remark}
The Lagrange method is able to solve the more general equality constrained optimization problem 
\begin{equation}\label{eq:general_reg}
\min_{x\in\mathbb{R}^n} \hspace{0.2cm} \phi(x) \hspace{0.5cm} \text{subject to} \hspace{0.5cm}  \frac{1}{2}||Ax-b||^2 = \frac{\sigma^2}{2}
\end{equation}
where $\phi(x)$ is a general regularization functional. However, since we are concerned with the Tikhonov solution,  we only consider the functional $\phi(x) = ||x||^2/2$. 
\end{remark}

\section{Projected Newton method}\label{sec:projected_newton_method}
In this section we derive the Projected Newton method by projecting the noise constrained Tikhonov problem \cref{eq:noise_constrained} onto a $k$ dimensional Krylov subspace $\mathcal{K}_k(A^TA,A^Tb)$, where $k$ is the iteration index of the new algorithm. In each iteration we expand the basis $V_{k}$ for the Krylov subspace by the bidiagonalization algorithm due to Golub and Kahan \cite{golub1965} and calculate an approximate solution in the Krylov subspace using a single Newton iteration on the projected system. We show that by calculating the Newton direction in the projected space, which can be done very efficiently for small values of $k$, we get a descent direction for the merit function $f(x,\lambda)$. We start our discussion by a brief review of the bidiagonal decomposition \cite{golub1965}.
\subsection{Bidiagonalization}
\begin{theorem}[Bidiagonal decomposition]
If $A\in\mathbb{R}^{m \times n}$ with $m\geq n$, then there exist orthonormal matrices 
	\[
				U = (u_0, u_1, \ldots, u_{m - 1})\in\mbbR^{m\times m}\quad\text{and}\quad
						V = (v_0, v_1, \ldots, v_{n-1})\in\mbbR^{n\times n}
		\]
		and a lower bidiagonal matrix $B$ with dimensions $(n+1)\times n$ when $m>n$ and $n\times n$ when $m=n$
		such that
		\[
				A = U\begin{pmatrix}B\\0\end{pmatrix}V^T.
		\]
\end{theorem}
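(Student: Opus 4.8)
The plan is to prove the decomposition constructively, by reducing $A$ to the stated form with a finite sequence of Householder reflections applied alternately from the right and from the left, and then taking $U$ and $V$ to be the products of the left, respectively right, reflections. The only building block I need is the elementary fact that for any nonzero $w\in\mbbR^p$ there is a Householder reflection $H = I - 2\,zz^T/(z^Tz)$, with $z = w - \|w\| e_1$ (the sign choice being immaterial here), that is symmetric and orthogonal and satisfies $Hw = \|w\|e_1$; applied to a subvector it annihilates all but one of its entries. A quicker but less self-contained route would be to invoke the singular value decomposition $A = U\Sigma V^T$ and note that the diagonal $\Sigma$ is a (degenerate) lower bidiagonal matrix, but since bidiagonalization is logically prior to the SVD I prefer the constructive argument, which also exposes the precise origin of the $(n+1)\times n$ shape.

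Concretely, I would run $n$ sweeps, indexed $i = 1,\dots,n$. In sweep $i$ I first apply a right reflection $V_i\in\mbbR^{n\times n}$ acting only on columns $i,\dots,n$ that maps the tail of row $i$ (its entries in columns $i,\dots,n$) to a single nonzero, leaving a diagonal entry at $(i,i)$ and zeros in columns $i+1,\dots,n$ of that row. I then apply a left reflection $U_i\in\mbbR^{m\times m}$ acting only on rows $i+1,\dots,m$ that maps the tail of column $i$ to a single nonzero, placing a subdiagonal entry at $(i+1,i)$ and clearing rows $i+2,\dots,m$ of that column. After the last sweep the only surviving entries sit on the diagonal $(i,i)$ and the subdiagonal $(i+1,i)$, which is precisely $\begin{pmatrix} B \\ 0 \end{pmatrix}$ with $B$ lower bidiagonal. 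Setting $U = U_1\cdots U_n$ and $V = V_1\cdots V_n$ and using the symmetry of each reflection then yields $A = U\begin{pmatrix} B \\ 0 \end{pmatrix}V^T$. The dimension count comes out of the last sweep: when $m>n$, sweep $n$ still has a genuine tail in rows $n+1,\dots,m$, so it creates a nonzero subdiagonal entry at $(n+1,n)$ and clears rows $n+2,\dots,m$, leaving a nonzero $(n+1)\times n$ block above $m-n-1$ zero rows; when $m=n$ there is no row $n+1$, the last left reflection is vacuous, and $B$ is $n\times n$.

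The one genuinely delicate point, which I expect to be the main obstacle, is verifying that no reflection destroys zeros created in earlier sweeps, i.e. the loop invariant that after sweep $i$ the leading $i$ rows and $i$ columns already carry their final bidiagonal entries. This reduces to a bookkeeping argument about the index ranges each reflection touches: the sweep-$i$ right reflection mixes only columns $i,\dots,n$, and every completed row $j<i$ is already zero in those columns (its nonzeros lie in columns $j-1$ and $j$), so completed rows are untouched; the sweep-$i$ left reflection mixes only rows $i+1,\dots,m$, and every completed column $j<i$ is already zero in those rows (its nonzeros lie in rows $j$ and $j+1$), while row $i$, and hence the diagonal entry just placed, lies outside its range. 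Recasting this as an induction on $n$ is possible, but then the inductive step needs the slightly stronger hypothesis $Ue_1 = e_1$ for the left factor, so that peeling off the first row and column and applying the hypothesis to the $(m-1)\times(n-1)$ trailing block preserves the subdiagonal entry at $(2,1)$; I would therefore carry $Ue_1 = e_1$ along in the induction. As an independent consistency check one can instead reduce the wide matrix $A^T\in\mbbR^{n\times m}$ to upper bidiagonal form and transpose, which turns the $(n+1)\times n$ lower bidiagonal block into an $n\times(n+1)$ upper bidiagonal one and makes the dimension count transparent.
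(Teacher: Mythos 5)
Your proposal is correct, but it does considerably more than the paper, whose entire ``proof'' of this theorem is the citation \emph{See Golub--Kahan (1965)}; you have reconstructed, essentially, the argument that lives inside that reference. Two features of your write-up are worth highlighting. First, the order of the reflections matters: the textbook Householder bidiagonalization (left reflection first, then right) produces an \emph{upper} bidiagonal matrix, whereas your right-then-left ordering within each sweep is exactly what yields the \emph{lower} bidiagonal shape, with nonzeros at $(i,i)$ and $(i+1,i)$, that the paper needs for consistency with its Bidiag1 recursion, and your analysis of the last sweep is precisely where the $(n+1)\times n$ versus $n\times n$ dichotomy comes from. Second, your constructive route buys something the paper's algorithmic viewpoint does not: the Lanczos-style recursion (Algorithm 1 in the paper) only produces the decomposition when no breakdown occurs ($\mu_k\neq 0$, $\nu_{k+1}\neq 0$, cf.\ the paper's Remark on breakdowns), whereas Householder reflections prove existence unconditionally --- provided you patch the one small gap in your text: when a tail vector $w$ is already zero, the vector $z = w - \|w\|e_1$ is zero and the reflection $I - 2zz^T/(z^Tz)$ is undefined, so you must stipulate that the reflection is taken to be the identity in that case (harmless, since the theorem never claims the bidiagonal entries are nonzero). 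Your loop-invariant bookkeeping (completed row $j$ has nonzeros only in columns $j-1,j$; completed column $j$ only in rows $j,j+1$; each reflection's index range misses these) is the right way to make the sweep argument rigorous, and your observation that the induction-on-$n$ variant requires carrying the strengthened hypothesis $Ue_1 = e_1$ is accurate --- and automatically satisfied by your construction, since every left reflection acts only on rows $i+1,\dots,m$ with $i\geq 1$.
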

\begin{proof}
See \cite{golub1965}.
\end{proof}
Starting from a given unit vector $u_0\in\mathbb{R}^m$ it is possible to
generate the columns of $U$, $V$ and $B$ recursively using the Bidiag1 procedure
proposed by Paige and Saunders \cite{paige1982_2, paige1982}, see \cref{alg:bidiag1}. Reorthogonalization can be added for numerical stability.
Note that this bidiagonal decomposition is the basis for the LSQR algorithm and
that after $k$ steps of Bidiag1 starting with the initial vector $u_0 = b/\left\|b\right\|$
we have matrices $V_k = [v_0,v_1,\ldots,v_{k-1}]\in\mbbR^{n\times k}$ and $U_{k + 1}=[u_0,u_1,\ldots,u_k]\in\mbbR^{m\times(k + 1)}$
with orthonormal columns and a lower bidiagonal matrix $B_{k + 1,k}\in\mbbR^{(k + 1)\times k}$
that satisfy
\begin{align} 
AV_{k} &= U_{k+1}B_{k+1,k} \label{eq:avub} \\ 
A^T U_{k+1} &= V_{k}B_{k+1,k}^T + \mu_{k}v_{k} e_{k + 1}^T  \label{eq:atu}
\end{align}
with $e_{k + 1} = (0,0,\ldots,0,1)^T \in\mathbb{R}^{k + 1}$. Here, the matrix $B_{k+1,k}$ consists of the first $k+1$ rows and $k$ columns of $B$, i.e we denote:
		\[
				B_{k+1,k}=\begin{pmatrix}\mu_0\\ \nu_1 & \mu_1\\  & \nu_2 & \ddots\\  &  &
						\ddots & \mu_{k-1}\\ 
						&  &  & \nu_{k}\end{pmatrix}\in\mathbb{R}^{(k+1)\times k}.
		\]
		Note that in the case of a square matrix $A$ the coefficient $\nu_n=0$ vanishes. 

The columns of $V_{k}$ and $U_{k}$ both span a Krylov subspace of dimension $k$, more specifically we have
\begin{equation*}
\begin{cases}
 \mathcal{R}(V_k) = \mathcal{K}_k(A^TA, A^Tb) \\
  \mathcal{R}(U_k)= \mathcal{K}_k(AA^T, b) 
\end{cases}
\end{equation*}
where for a general square matrix $M$ and vector $z$ the Krylov subspace of dimension $k$ is defined as
\begin{equation*}
 \mathcal{K}_k(M,z) = \mathcal{R}\left([z,Mz,M^2z, \ldots, M^{k-1}z]\right)
\end{equation*}
and $\mathcal{R}(\cdot)$ denotes the range of the matrix, i.e. the span of its column vectors.
\begin{algorithm}
		\caption{Bidiag1}\label{alg:bidiag1}
		\begin{algorithmic}[1]
				\STATE{ Set initial unit vector $u_0= b/\left\|b\right\|$.}
				\STATE{ Set $\nu_0 v_{-1} = 0$}
				\FOR{$k = 0, 1, \ldots, n - 1$}
						\STATE{ $r_k = A^Tu_k - \nu_kv_{k-1}$}
						\STATE{ Reorthogonalize $r_k$ w.r.t. $V_k$ if necessary.}
						\STATE{ $\mu_k = \left\|r_k\right\|$ and $v_k = r_k/\mu_k$.}
						\STATE{ $p_k = Av_k - \mu_ku_k$}
						\STATE{ Reorthogonalize $p_k$ w.r.t. $U_{k+1}$ if necessary.}
						\STATE{ $\nu_{k+1} = \left\|p_k\right\|$ and $u_{k+1} = p_k/\nu_{k+1}$.}
				\ENDFOR
		\end{algorithmic}
\end{algorithm}
\begin{remark} \label{thm:breakdown}
It is possible that a breakdown may occur in \Cref{alg:bidiag1} in iteration $k$ when $\mu_k = 0$ on line $6$ or $v_{k+1}=0$ on line $9$. This will occur when the vector $b$ is a linear combination of less than $k$ eigenvectors of $AA^T$. However, since we assume that $b$ contains a random noise component $e$, the probability that this will happen is extremely low. Moreover, a breakdown also implies that we have found an invariant subspace. In the case of the LSQR algorithm, a breakdown means that the exact solution of the least squares problem $\min||Ax-b||$ has been found \cite{jia2010some}. In the following we will assume that no such breakdown occurs.
\end{remark}
\subsection{The projected Newton direction}
In order to solve the noise constrained Tikhonov problem \cref{eq:noise_constrained} we calculate a series of approximate solutions in the Krylov subspace spanned by the columns of the orthonormal basis $V_k$: 
\begin{equation*}
		x_k\in\mathcal{R}(V_k) = \mathcal{K}_k(A^TA, A^Tb) = \mathcal{R}\left(\left[ A^T b, (A^T A ) A^Tb,\ldots, (A^T A)^{k-1} A^T b\right]\right).
\end{equation*}
This means that $x_k = V_k y_k$ for some $y_k \in \mathbb{R}^k$ with $k\geq 1$ and let $x_0 = 0$. Using \cref{eq:avub} and the fact that $V_k$ and $U_{k+1}$ have orthonormal columns we have $||x_k||  = ||V_k y_k || = ||y_k||$ and
 \begin{equation} \label{eq:norm_equal}
||Ax_k - b || = ||AV_{k} y_k - b|| = ||U_{k + 1}B_{k+1,k} y_k - U_{k+1}c_{k+1}|| =  ||B_{k+1,k} y_k -c_{k+1}|| 
\end{equation}
where $c_{k + 1} = (||b||,0,\ldots,0)^T \in\mathbb{R}^{k+1}$. 
So now we can see that the \textit{projected minimization problem} of dimension $k$: 
\begin{equation} \label{projected_minequations1}
\min_{x_k\in\mathcal{R}(V_{k})} \hspace{0.2cm} \frac{1}{2}||x_k||^2 \hspace{0.5cm} \text{subject to} \hspace{0.5cm} \frac{1}{2} ||Ax_k-b||^2 = \frac{\sigma^2}{2}
\end{equation}
can alternatively be written using the bidiagonal decomposition as 
\begin{equation} \label{projected_minequations2}
\min_{y_k\in \mathbb{R}^k} \hspace{0.2cm} \frac{1}{2}||y_k||^2 \hspace{0.5cm} \text{subject to} \hspace{0.5cm} \frac{1}{2} ||B_{k+1,k} y_k -c_{k+1}|| ^2 =\frac{\sigma^2}{2}.
\end{equation}
Similarly as explained in \cref{sec:newtons_method} the solution $y_k^{*}$ of \cref{projected_minequations2} with corresponding Lagrange multiplier $\lambda_k^{*}$ satisfies $F^{(k)}(y_k^*,\lambda_k^*) = 0$ where 
the equation is now given by
\begin{equation} \label{eq:projected_f}
F^{(k)}(y,\lambda) = 
 \begin{pmatrix}\lambda B_{k + 1,k}^T (B_{k + 1,k} y - c_{k + 1}) + y \\
 \frac{1}{2} ||B_{k + 1,k} y - c_{k + 1}||^2 - \frac{\sigma^2}{2}
\end{pmatrix}. 
\end{equation}
The function $F^{(k)}(y,\lambda)$ can be seen as a projected version of the function $F(x,\lambda)$ given by \cref{eq:F}. The Jacobian $J^{(k)}(y,\lambda) \in \mathbb{R}^{(k + 1)\times(k + 1)}$ of $F^{(k)}(y,\lambda)$ is given by
\begin{equation*}
J^{(k)}(y,\lambda)  = \begin{pmatrix} \lambda B_{k + 1,k}^T B_{k + 1,k} + I_{k} &  B_{k + 1,k}^T( B_{k + 1,k} y - c_{k + 1}) \\  ( B_{k + 1,k} y - c_{k + 1})^T B_{k + 1,k} & 0 \end{pmatrix}
\end{equation*}
where $I_k$ is the $k\times k$ identity matrix. If $\lambda \geq 0$  then $\lambda B_{k + 1,k}^T B_{k + 1,k} + I_{k}$ is a positive definite matrix and we again have that $J^{(k)}(y,\lambda)$ is nonsingular if and only if  
\begin{equation} \label{eq:noteq}
B_{k + 1,k}^T( B_{k + 1,k} y - c_{k+1}) \neq 0.
\end{equation}
Let us denote $\bar{y}_{k - 1} = (y_{k - 1}^T,0)^T \in \mathbb{R}^{k}$ for $k\geq 1$ where $y_{k-1}$ is an approximation of the solution of the projected minimization problem of dimension $k-1$ and $y_0 = (\hspace{0.1cm})$ an empty vector and $\lambda_0$ an initial guess for the regularization parameter. Note that $\bar{y}_{k-1}$ can be seen as a good initial guess for the projected  minimization problem of dimension $k$, see \cref{projected_minequations2}. 
If $J^{(k)}(\bar{y}_{k - 1},\lambda_{k - 1})$ is nonsingular -- a condition we will enforce by step-length selection -- we can calculate the Newton direction for the projected function $F^{(k)}(y,\lambda)$ starting from the point $(\bar{y}_{k-1},\lambda_{k-1})$, i.e.: 
\begin{equation} \label{eq:newtoneq}
\begin{pmatrix} \Delta y_{k}\\ \Delta \lambda_{k} \end{pmatrix} = -  J^{(k)}(\bar{y}_{k-1},\lambda_{k-1}) ^{-1} F^{(k)}(\bar{y}_{k-1},\lambda_{k-1})
\end{equation}
We can then update $\bar{y}_{k-1}$ and $\lambda_{k-1}$ by 
\begin{equation*}
\begin{cases}
y_{k} = \bar{y}_{k-1} + \gamma  \Delta y_{k} \\
\lambda_{k} = \lambda_{k - 1} + \gamma \Delta \lambda_{k}
\end{cases}
\end{equation*}
with a suitably chosen step-length $\gamma$. This gives us a corresponding update for $x_{k - 1}$:
\begin{align*}
x_{k} = V_{k} y_{k} &= V_{k}\bar{y}_{k-1} + \gamma  V_{k}\Delta y_{k} \\
 &= V_{k - 1}y_{k - 1} + \gamma  V_{k}\Delta y_{k} \\
 &= x_{k - 1} + \gamma  \underbrace{V_{k}\Delta y_{k}}_{:= \Delta x_{k}}.
\end{align*}
By multiplying the Newton step $\Delta y_{k}$ for the projected variable $\bar{y}_{k-1}$ by $V_k$ we obtain a step $\Delta x_{k}$ for $x_{k-1}$. Note that this step is different from the Newton step that would be obtained by solving \cref{eq:newton_system} for $(x_{k-1},\lambda_{k-1})$. However, we will show that the step $(\Delta x_k^T,\Delta \lambda_k)^T$ is a descent direction for $f(x_{k-1},\lambda_{k-1})$, which is the main result of the current section. 
We start by proving the following lemma: 
\begin{lemma} \label{thm:lemma1}
Let $F(x,\lambda)$ be defined as \cref{eq:F} and $F^{(k)}(y,\lambda)$  as \cref{eq:projected_f}. Furthermore let $\bar{y}_{k - 1}=(y_{k - 1}^T,0)^T  \in\mathbb{R}^k $  and $x_{k-1}\in\mathbb{R}^n$ be such that $x_{k-1} = V_{k}\bar{y}_{k-1}$ for the orthonormal basis $V_k$ generated by Bidiag1. Then we have following equality: 
\begin{equation} \label{eq:f_equal_proj}
||F(x_{k-1},\lambda_{k-1})|| = ||F^{(k)}(\bar{y}_{k-1},\lambda_{k-1})||. 
\end{equation}
\end{lemma}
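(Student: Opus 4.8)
The plan is to treat the two blocks of $F$ and $F^{(k)}$ separately, exploiting the orthonormality of the Bidiag1 bases and the two bidiagonalization identities \cref{eq:avub} and \cref{eq:atu}. Throughout I would abbreviate $r := B_{k+1,k}\bar{y}_{k-1} - c_{k+1} \in \mathbb{R}^{k+1}$ and write $\lambda := \lambda_{k-1}$.

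First I would dispatch the scalar (second) component. Using $x_{k-1} = V_k\bar{y}_{k-1}$ together with \cref{eq:avub}, and the fact that $b = \|b\|\,u_0 = U_{k+1}c_{k+1}$, one obtains $Ax_{k-1} - b = U_{k+1}r$. Since $U_{k+1}$ has orthonormal columns this gives $\|Ax_{k-1}-b\| = \|r\|$ (this is precisely \cref{eq:norm_equal} applied to $\bar{y}_{k-1}$), so the two constraint entries $\tfrac12\|Ax_{k-1}-b\|^2 - \tfrac{\sigma^2}{2}$ and $\tfrac12\|r\|^2 - \tfrac{\sigma^2}{2}$ coincide.

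The delicate part is the vector (first) component. Write $g := \lambda A^T(Ax_{k-1}-b) + x_{k-1} \in \mathbb{R}^n$ and $h := \lambda B_{k+1,k}^T r + \bar{y}_{k-1} \in \mathbb{R}^k$ for the top blocks of $F$ and $F^{(k)}$. Starting from $A^T(Ax_{k-1}-b) = A^TU_{k+1}r$ and applying \cref{eq:atu}, I would expand
\[
A^T(Ax_{k-1}-b) = V_k B_{k+1,k}^T r + \mu_k\,(e_{k+1}^T r)\, v_k,
\]
which, after inserting $x_{k-1} = V_k\bar{y}_{k-1}$, yields $g = V_k h + \lambda\mu_k(e_{k+1}^T r)\,v_k$. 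The appearance of the out-of-range contribution $\mu_k(e_{k+1}^T r)\,v_k$ — note $v_k \perp \mathcal{R}(V_k)$ — is exactly where the identity could break, and showing it vanishes is the crux of the lemma. The main obstacle is therefore not algebra but recognizing that the zero padding of $\bar{y}_{k-1}$ interacts precisely with the bidiagonal structure to annihilate this term.

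To finish I would inspect the last entry $e_{k+1}^T r$. The last row of the lower bidiagonal matrix $B_{k+1,k}$ equals $(0,\ldots,0,\nu_k)$ and the last entry of $c_{k+1}$ is zero, so $e_{k+1}^T r = \nu_k\,(\bar{y}_{k-1})_k$; but by construction $\bar{y}_{k-1} = (y_{k-1}^T,0)^T$ has vanishing last component, whence $e_{k+1}^T r = 0$. Consequently $g = V_k h$ exactly, and the orthonormality of $V_k$ gives $\|g\| = \|h\|$. Combining this with the equality of the scalar blocks and the Pythagorean splitting of the norm of each stacked vector yields $\|F(x_{k-1},\lambda_{k-1})\| = \|F^{(k)}(\bar{y}_{k-1},\lambda_{k-1})\|$, as claimed.
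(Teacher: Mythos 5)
Your proof is correct and follows essentially the same route as the paper's: you reduce the scalar block via \cref{eq:norm_equal}, expand the vector block with \cref{eq:atu}, observe that the term $\mu_k (e_{k+1}^T r)\,v_k$ vanishes because the zero last component of $\bar{y}_{k-1}$ (together with the structure of $B_{k+1,k}$ and $c_{k+1}$) forces $e_{k+1}^T r = 0$, and finish with orthonormality of $V_k$. Your write-up is in fact slightly more explicit than the paper's, which only states that the last element of $B_{k+1,k}\bar{y}_{k-1} - c_{k+1}$ is zero without computing it from the last row of $B_{k+1,k}$.
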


\begin{proof}
First note that we can write
\begin{equation*}
||F(x_{k-1},\lambda_{k-1})||^2 = ||\lambda_{k-1} A^T(Ax_{k-1} - b) + x_{k-1}||^2 +  \left( \frac{1}{2}||Ax_{k-1} - b||^2 - \frac{\sigma^2}{2}\right)^2.
\end{equation*}
Similarly as shown in \cref{eq:norm_equal} we have that $||Ax_{k-1} - b|| = ||B_{k+1,k}\bar{y}_{k-1} - c_{k+1}||$. Now let us take a closer look at the first term:
 \begin{eqnarray*}
&&||\lambda_{k-1} A^T(Ax_{k-1} - b) + x_{k-1}|| \\
&=&||\lambda_{k-1} A^T(AV_{k}\bar{y}_{k-1} - b) + V_{k}\bar{y}_{k-1}||\\
&\stackrel{\cref{eq:avub}}{=} &||\lambda_{k-1} A^TU_{k+1}(B_{k+1,k}\bar{y}_{k-1} - c_{k+1}) + V_{k}\bar{y}_{k-1}|| \\
&\stackrel{\cref{eq:atu}}{=} &||\lambda_{k-1} (V_{k}B_{k+1,k}^T + \mu_{k}v_{k} e_{k + 1}^T)(B_{k+1,k}\bar{y}_{k-1} - c_{k+1}) + V_{k}\bar{y}_{k-1}|| \\
&=&  ||\lambda_{k-1} V_{k}B_{k+1,k}^T(B_{k+1,k}\bar{y}_{k-1} - c_{k+1}) + V_{k}\bar{y}_{k-1}|| \\
&=&||\lambda_{k-1} B_{k+1,k}^T(B_{k+1,k}\bar{y}_{k-1} - c_{k+1}) + \bar{y}_{k-1}||.
\end{eqnarray*}
The second to last equality follows from the fact that the last element of $(B_{k+1,k}\bar{y}_{k-1} - c_{k+1})$ is zero and that the matrices $V_{k}B_{k+1,k}^T$ and
$V_{k}B_{k+1,k}^T + \mu_{k}v_{k} e_{k + 1}^T$ only differ in the last column. Now the proof follows from the fact that we have
\begin{multline*}
||F^{(k)}(\bar{y}_{k-1},\lambda_{k-1})||^2 =||\lambda_{k-1} B_{k+1,k}^T(B_{k+1,k}\bar{y}_{k-1} - c_{k+1}) + \bar{y}_{k-1}||^2 \\
 + \left( \frac{1}{2}||B_{k+1,k}\bar{y}_{k-1} - c_{k+1}||^2 -  \frac{\sigma^2}{2}\right)^2.
 \end{multline*}

\end{proof}

\begin{lemma} \label{thm:lemma2}
Let $F(x,\lambda)$ be defined as \cref{eq:F} and $F^{(k)}(y,\lambda)$  as \cref{eq:projected_f}. Furthermore let $\bar{y}_{k - 1}=(y_{k - 1}^T,0)^T  \in\mathbb{R}^k $  and $x_{k-1}\in\mathbb{R}^n$ be such that $x_{k-1} = V_{k}\bar{y}_{k-1}$ for the orthonormal basis $V_k$ generated by Bidiag1. Then we have following equality: 
\begin{equation} \label{eq:equal_jac}
\begin{pmatrix} V_{k}^T & 0 \\ 0 & 1 \end{pmatrix} J(x_{k-1},\lambda_{k-1})F(x_{k-1},\lambda_{k-1}) = J^{(k)}(\bar{y}_{k-1},\lambda_{k-1}) F^{(k)}(\bar{y}_{k-1},\lambda_{k-1}).
\end{equation}
\end{lemma}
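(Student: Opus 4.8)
The plan is to verify the identity block by block, following the $2\times 2$ partition inherited from $J$ and $F$, and to collapse every occurrence of $A$, $A^T$ and $b$ on the left-hand side into the bidiagonal data $B_{k+1,k}$ and $c_{k+1}$ using \cref{eq:avub}, \cref{eq:atu} and the orthonormality of $U_{k+1}$ and $V_k$. Write $B=B_{k+1,k}$, $c=c_{k+1}$, $x=x_{k-1}$, $\bar y=\bar y_{k-1}$, $\lambda=\lambda_{k-1}$, and abbreviate the two components of $F(x,\lambda)$ as $g=\lambda A^T(Ax-b)+x$ and $h=\tfrac12\|Ax-b\|^2-\tfrac{\sigma^2}{2}$, and those of $F^{(k)}(\bar y,\lambda)$ as $\tilde g=\lambda B^T(B\bar y-c)+\bar y$ and $\tilde h=\tfrac12\|B\bar y-c\|^2-\tfrac{\sigma^2}{2}$.

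First I would assemble the elementary relations that the proof of \cref{thm:lemma1} already supplies. Since $u_0=b/\|b\|$ we have $b=U_{k+1}c$, so together with $x=V_k\bar y$ and \cref{eq:avub} this yields $Ax-b=U_{k+1}(B\bar y-c)$. The same chain of substitutions performed in \cref{thm:lemma1} in fact establishes the stronger, unnormed identity $g=V_k\tilde g$; hence $V_k^T g=\tilde g$ by orthonormality of $V_k$, and $h=\tilde h$ by \cref{eq:norm_equal}. These three equalities are essentially the only inputs needed.

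Next I would multiply out both sides. The left-hand side equals
\[
\begin{pmatrix} V_k^T\big[(\lambda A^TA+I)g+h\,A^T(Ax-b)\big]\\ (Ax-b)^T Ag \end{pmatrix},
\]
and the target right-hand side equals
\[
\begin{pmatrix} (\lambda B^TB+I_k)\tilde g+\tilde h\,B^T(B\bar y-c)\\ (B\bar y-c)^T B\tilde g \end{pmatrix}.
\]
For the top block I split off the two summands. Substituting $g=V_k\tilde g$ and using $V_k^TA^TAV_k=(AV_k)^T(AV_k)=B^TB$ (from \cref{eq:avub} and $U_{k+1}^TU_{k+1}=I_{k+1}$) turns the first summand into $(\lambda B^TB+I_k)\tilde g$. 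For the second summand, \cref{eq:atu} gives $A^T(Ax-b)=(V_kB^T+\mu_k v_k e_{k+1}^T)(B\bar y-c)$, and left-multiplying by $V_k^T$ removes the rank-one tail because $V_k^T v_k=0$, so that $V_k^TA^T(Ax-b)=B^T(B\bar y-c)$; multiplying by $h=\tilde h$ finishes the top block. For the bottom scalar block I insert $Ax-b=U_{k+1}(B\bar y-c)$ and $Ag=U_{k+1}B\tilde g$ and cancel $U_{k+1}^TU_{k+1}=I_{k+1}$, reducing $(Ax-b)^TAg$ to $(B\bar y-c)^TB\tilde g$.

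The one point that requires care is the rank-one correction $\mu_k v_k e_{k+1}^T$ in \cref{eq:atu}, which is discarded twice but for two different reasons: when establishing $g=V_k\tilde g$ it disappears because $e_{k+1}^T(B\bar y-c)=0$ (the last row of the lower bidiagonal $B$ meets the zero last entry of $\bar y$, and $c$ has zero last entry), whereas in the top-block computation it disappears because $V_k^T v_k=0$. Keeping these two annihilations straight is the main obstacle; once that bookkeeping is done, the remaining steps are direct applications of \cref{eq:avub}, \cref{eq:atu} and the orthonormality relations.
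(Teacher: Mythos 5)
Your proof is correct, and it follows the same block-by-block verification strategy as the paper, but the way you organize the computation is a genuine streamlining worth noting. The paper expands the first component $g=\lambda A^T(Ax-b)+x$ inside both blocks and, for the bottom scalar block, is then forced to compute $U_{k+1}^T A A^T U_{k+1} = B_{k+1,k}B_{k+1,k}^T + \mu_k^2 e_{k+1}e_{k+1}^T$ via \cref{eq:atu} and argue that the rank-one discrepancy is harmless because the last entry of $t_{k+1}=B_{k+1,k}\bar y_{k-1}-c_{k+1}$ vanishes. You instead isolate the unnormed identity $g=V_k\tilde g$ (which is indeed implicit in the chain of equalities inside the norms in the proof of \cref{thm:lemma1}, one step before $V_k$ is pulled out) and treat it as the central input; with $Ag = AV_k\tilde g = U_{k+1}B_{k+1,k}\tilde g$ the bottom block then collapses by plain orthonormality of $U_{k+1}$, and the $AA^T$ identity is never needed. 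This buys two things: both blocks are verified in full (the paper only details the second and dismisses the first with ``similarly''), and the two distinct annihilation mechanisms for the rank-one tail $\mu_k v_k e_{k+1}^T$ --- $e_{k+1}^T(B_{k+1,k}\bar y_{k-1}-c_{k+1})=0$ when proving $g=V_k\tilde g$, versus $V_k^T v_k=0$ in the top block --- are made explicit rather than left tangled together, which is exactly the bookkeeping point on which a careless version of this proof would go wrong.
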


\begin{proof}
Let us first introduce notations $t_{k+1} = B_{k + 1,k} \bar{y}_{k-1} - c_{k + 1}$ and
\begin{equation*}
\zeta_k = \frac{1}{2} \left(||Ax_{k-1}-b||^2 -  \sigma^2 \right) = \frac{1}{2}\left(||t_{k+1}||^2 - \sigma^2 \right).
\end{equation*}
 The left-hand side of equality \cref{eq:equal_jac} can be rewritten as
\begin{align*}
&\begin{pmatrix} V_{k}^T & 0 \\ 0 & 1 \end{pmatrix} J(x_{k-1},\lambda_{k-1})F(x_{k-1},\lambda_{k-1}) \\
=&\begin{pmatrix} V_{k}^T & 0 \\ 0 & 1 \end{pmatrix} \begin{pmatrix}\lambda_{k-1} A^T A + I  &  A^T(Ax_{k-1} - b) \\ (Ax_{k-1}-b)^T A & 0 \end{pmatrix}\begin{pmatrix} \lambda_{k-1} A^T(Ax_{k-1} - b) + x_{k-1} \\ \zeta_{k}\end{pmatrix} \\
=& \begin{pmatrix}\lambda_{k-1} V_{k}^TA^T A + V_{k}^T & V_{k}^T A^T(Ax_{k-1} - b) \\ (Ax_{k-1}-b)^T A & 0 \end{pmatrix}\begin{pmatrix} \lambda_{k-1} A^T(Ax_{k-1} - b) + x_{k-1} \\ \zeta_{k} \end{pmatrix} \\
=& \begin{pmatrix} (\lambda_{k-1} V_{k}^TA^T A + V_{k}^T )(\lambda_{k-1} A^T(Ax_{k-1} - b) + x_{k-1}) + \zeta_{k}V_{k}^T A^T(Ax_{k-1} - b) \\  \lambda_{k-1} (Ax_{k-1}-b)^T A A^T(Ax_{k-1}- b) + (Ax_{k-1}-b)^T A x_{k-1}\end{pmatrix}.
\end{align*}
Similarly we can rewrite the right-hand side of equality \cref{eq:equal_jac} side as
\begin{eqnarray*}
&&J^{(k)}(\bar{y}_{k-1},\lambda_{k-1}) F^{(k)}(\bar{y}_{k-1},\lambda_{k-1}) \\
&=& \begin{pmatrix} \lambda_{k-1} B_{k + 1,k}^T B_{k + 1,k} + I_{k} &  B_{k + 1,k}^T t_{k+1}\\  t_{k+1}^T B_{k + 1,k} & 0 \end{pmatrix}\begin{pmatrix} \lambda_{k-1} B_{k + 1,k}^T t_{k+1} +\bar{y}_{k-1} \\ \zeta_{k}\end{pmatrix} \\
&=& \begin{pmatrix} (\lambda_{k-1} B_{k + 1,k}^T B_{k + 1,k} + I_{k} )(\lambda_{k-1} B_{k + 1,k}^T t_{k+1} + \bar{y}_{k-1}) + \zeta_{k} B_{k + 1,k}^T t_{k+1}\\ \lambda_{k-1}t_{k+1} ^T B_{k + 1,k}B_{k + 1,k}^T t_{k+1} +t_{k+1}^T B_{k + 1,k}\bar{y}_{k-1}\end{pmatrix}.
\end{eqnarray*}
We can now compare all individual terms and check if they are indeed equal.
Let us work out the second block as an example: 
\begin{eqnarray*}
&&\lambda_{k-1} (Ax_{k-1}-b)^T A A^T(Ax_{k-1}- b) + (Ax_{k-1}-b)^T A x_{k-1} \\
&=& \lambda_{k-1} (AV_{k}\bar{y}_{k-1}-b)^T A A^T(AV_{k}\bar{y}_{k-1}- b) + (AV_{k}\bar{y}_{k-1}-b)^T AV_{k}\bar{y}_{k-1} \\
&=& \lambda_{k-1} t_{k+1}^T U_{k+1}^T A A^T U_{k+1}t_{k+1} + t_{k+1}^T U_{k+1}^T AV_{k}\bar{y}_{k-1} \\
&=& \lambda_{k-1} t_{k+1}^T U_{k+1}^T A A^T U_{k+1}t_{k+1} + t_{k+1}^T B_{k+1,k}\bar{y}_{k-1} \\
&=& \lambda_{k-1} t_{k+1}^T B_{k+1,k} B_{k+1,k}^T t_{k+1} + t_{k+1}^T B_{k+1,k}\bar{y}_{k-1}. 
\end{eqnarray*}
The last equality follows from the fact that $U_{k+1} A A^T U_{k+1}$ and $B_{k+1,k} B_{k+1,k}^T$ only differ in the last column and that the last element of $t_{k+1}$ is equal to zero. Indeed, using \cref{eq:atu} we have
\begin{eqnarray*}
&& U_{k+1}^T A A^T U_{k+1} \\
&=& \left(V_{k}B_{k+1,k}^T + \mu_{k}v_{k} e_{k + 1}^T  \right)^T\left( V_{k}B_{k+1,k}^T + \mu_{k}v_{k} e_{k + 1}^T\right) \\
&=& B_{k+1,k} \underbrace{V_{k}^T V_{k}}_{= I_{k}} B_{k+1,k}^T + 2\mu_{k} \underbrace{e_{k+1} v_{k}^T V_{k}}_{=0_{k+1,k}}B_{k+1,k}^T +\mu^2_{k}e_{k+1} \underbrace{v_{k}^T v_{k}}_{=1} e_{k+1}^T  \\
&=&B_{k+1,k} B_{k+1,k}^T +\mu^2_{k}e_{k+1}e_{k+1}^T, 
\end{eqnarray*}
where $0_{k+1,k}$ is a $(k+1)\times k$ matrix containing all zeros. This shows that these matrices only differ in the last element. Equality of the first component can be proven similarly. 
\end{proof}

\Cref{thm:lemma1,thm:lemma2} allow us to prove the main result of the current section:

\begin{theorem}\label{thm:descent}
Let $\begin{pmatrix}\Delta y_k \\\Delta\lambda_k \end{pmatrix}$ be defined as in \cref{eq:newtoneq} and let $\Delta x_{k} = V_{k}\Delta y_k$, where $V_{k}$ is the orthonormal matrix generated by Bidiag1. Then we have
\begin{equation}
\begin{pmatrix} \Delta x_{k} \\ \Delta \lambda_{k} \end{pmatrix}^T \nabla f(x_{k - 1},\lambda_{k -1}) = -||F(x_{k-1},\lambda_{k-1})||^2 \leq 0
\end{equation}
which means that $\begin{pmatrix} \Delta x_{k} \\ \Delta \lambda_{k} \end{pmatrix}$ is a descent direction for $f(x_{k-1},\lambda_{k-1})$.
\end{theorem}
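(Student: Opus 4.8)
The plan is to transport the full-space inner product into the $k$-dimensional projected space using \cref{thm:lemma2}, reduce it there to a squared norm via the defining equation of the projected Newton step, and then translate that norm back with \cref{thm:lemma1}. Before starting I would record two structural facts. First, the merit-function gradient is $\nabla f(x,\lambda) = J(x,\lambda)^T F(x,\lambda)$, exactly the identity already used in the descent calculation of \cref{sec:newtons_method}. Second -- and this is the observation that makes everything collapse -- both the full Jacobian $J(x,\lambda)$ in \cref{eq:jacf} and its projected counterpart $J^{(k)}(y,\lambda)$ are symmetric, since in each the $(1,1)$ block is symmetric and the two off-diagonal blocks are mutual transposes. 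In particular $\nabla f(x_{k-1},\lambda_{k-1}) = J(x_{k-1},\lambda_{k-1})F(x_{k-1},\lambda_{k-1})$.

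Next I would rewrite the search direction in block-factored form. Since $\Delta x_k = V_k \Delta y_k$, we have
\[
\begin{pmatrix} \Delta x_{k} \\ \Delta \lambda_{k} \end{pmatrix} = \begin{pmatrix} V_k & 0 \\ 0 & 1 \end{pmatrix}\begin{pmatrix} \Delta y_k \\ \Delta \lambda_{k} \end{pmatrix},
\]
so transposing peels off a factor $\left(\begin{smallmatrix} V_k^T & 0 \\ 0 & 1 \end{smallmatrix}\right)$ and lands it directly onto the full-space quantity $J F$. That is precisely the expression on the left-hand side of \cref{eq:equal_jac}, so \cref{thm:lemma2} lets me replace it by the projected quantity $J^{(k)}(\bar{y}_{k-1},\lambda_{k-1})F^{(k)}(\bar{y}_{k-1},\lambda_{k-1})$, giving
\[
\begin{pmatrix} \Delta x_{k} \\ \Delta \lambda_{k} \end{pmatrix}^T \nabla f(x_{k-1},\lambda_{k-1}) = \begin{pmatrix} \Delta y_k \\ \Delta \lambda_{k} \end{pmatrix}^T J^{(k)}(\bar{y}_{k-1},\lambda_{k-1})F^{(k)}(\bar{y}_{k-1},\lambda_{k-1}).
\]

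To finish I would substitute the definition of the projected Newton step from \cref{eq:newtoneq}, namely $(\Delta y_k^T,\Delta\lambda_k)^T = -(J^{(k)})^{-1}F^{(k)}$ at $(\bar{y}_{k-1},\lambda_{k-1})$. This turns the right-hand side into $-(F^{(k)})^T (J^{(k)})^{-T} J^{(k)} F^{(k)}$, and the symmetry of $J^{(k)}$ collapses $(J^{(k)})^{-T}J^{(k)}$ to the identity, leaving $-\|F^{(k)}(\bar{y}_{k-1},\lambda_{k-1})\|^2$. A closing appeal to \cref{thm:lemma1} rewrites this norm in the full space as $-\|F(x_{k-1},\lambda_{k-1})\|^2$, which is the claimed identity; the inequality is then immediate. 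The genuine content has already been discharged in \cref{thm:lemma1,thm:lemma2}, so the only step needing care is the middle one: one must check that transposing the factored direction reproduces the left-hand side of \cref{eq:equal_jac} \emph{verbatim} (in particular that the transpose in $\nabla f = J^T F$ is harmless because $J$ is symmetric) before \cref{thm:lemma2} can be invoked. Once that symmetry bookkeeping is in place, the proof is a short chain of substitutions.
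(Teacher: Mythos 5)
Your proof is correct and takes essentially the same route as the paper's: write $\nabla f = JF$, factor the direction through $\bigl(\begin{smallmatrix} V_k^T & 0 \\ 0 & 1 \end{smallmatrix}\bigr)$ to expose the left-hand side of \cref{eq:equal_jac}, invoke \cref{thm:lemma2}, substitute \cref{eq:newtoneq}, and finish with \cref{thm:lemma1}. Your explicit observation that the symmetry of $J$ and $J^{(k)}$ is what justifies both $\nabla f = JF$ and the collapse of $(J^{(k)})^{-T}J^{(k)}$ to the identity is used only implicitly in the paper, so making it explicit is a welcome clarification rather than a deviation.
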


\begin{proof}

The result now follows from an easy calculation:  
\begin{align*}
& \begin{pmatrix} \Delta x_{k} \\ \Delta \lambda_{k} \end{pmatrix}^T \nabla f(x_{k - 1},\lambda_{k -1}) \\
=& \begin{pmatrix} V_k \Delta y_{k} \\ \Delta \lambda_{k} \end{pmatrix}^T J(x_{k-1},\lambda_{k-1})F(x_{k-1},\lambda_{k-1}) \\
=& \begin{pmatrix} \Delta y_{k} \\ \Delta \lambda_{k} \end{pmatrix}^T \begin{pmatrix} V_{k}^T & 0 \\ 0 & 1 \end{pmatrix} J(x_{k-1},\lambda_{k-1})F(x_{k-1},\lambda_{k-1}) \\
\stackrel{\cref{eq:equal_jac}}{=}& \begin{pmatrix} \Delta y_{k} \\ \Delta \lambda_{k} \end{pmatrix}^T J^{(k)}(\bar{y}_{k-1},\lambda_{k-1}) F^{(k)}(\bar{y}_{k-1},\lambda_{k}) \\
\stackrel{\cref{eq:newtoneq}}{=}& -  \left( J^{(k)}(\bar{y}_{k-1},\lambda_{k-1}) ^{-1} F^{(k)}(\bar{y}_{k-1},\lambda_{k-1}) \right)^T J^{(k)}(\bar{y}_{k-1},\lambda_{k-1}) F^{(k)}(\bar{y}_{k-1},\lambda_{k-1}) \\
=& - ||F^{(k)}(\bar{y}_{k-1},\lambda_{k-1}) ||^2  \\
\stackrel{\cref{eq:f_equal_proj}}{=}& - ||F(x_{k-1},\lambda_{k-1})||^2 \leq 0. 
\end{align*}
\end{proof}

\Cref{thm:descent} implies that we can compute the Newton direction for the projected function $F^{(k)}(\bar{y}_{k-1},\lambda_{k-1})$, which we refer to as a \textit{projected Newton direction}, and obtain a descent direction for $f(x_{k-1},\lambda_{k-1})$ by multiplying the step $\Delta y_k$ with the Krylov subspace basis $V_k$. Alternatively, this can be seen as performing a single Newton iteration on the projected minimization problem of dimension $k$ with initial guess $(y_{k-1}^T,0)^T$, where $y_{k-1}$ is the approximate solution of the $k-1$ dimensional projected optimization problem and then multiplying the result $y_k$ with $V_{k}$ to obtain an approximate solution $x_{k} = V_k y_k $ of the noise constrained Tikhonov problem \cref{eq:noise_constrained}. Subsequently, the Krylov subspace basis is expanded and the procedure is repeated until a sufficiently accurate solution is found. We will use $||F(x_{k},\lambda_{k})||$ to monitor convergence and stop the iterations when this value is smaller than some prescribed tolerance $tol$. This norm can be calculated very efficiently using the Krylov subspace, see \cref{thm:remark_fbar}. Since in practice the quality of the solution will not improve drastically when requiring a very accurate solution, we suggest using a modest tolerance such as $tol = 10^{-3}$. 

When $k \ll n$ calculating the projected Newton step is much cheaper than calculating the true Newton direction for $F(x_{k-1},\lambda_{k-1})$, since the former requires us to solve a $(k + 1)\times (k + 1)$ linear system, while the latter is found by solving an $(n + 1) \times (n + 1)$ linear system. Moreover, using \cref{thm:descent} we can show the existence of a step-length $\gamma_k > 0$ such that 
\begin{equation} \label{eq:sufficient_decrease}
\frac{1}{2} ||F(x_{k},\lambda_{k})||^2 \leq \left(\frac{1}{2} - c\gamma_k \right) ||F(x_{k - 1},\lambda_{k - 1})||^2
\end{equation}
with $c\in (0,1)$, provided that $||F(x_{k - 1},\lambda_{k - 1})||\neq 0$. Equation \cref{eq:sufficient_decrease} is often referred to in literature as the sufficient decrease condition or Armijo condition. The following theorem can be used to show existence of a suitable step-length:

\begin{theorem}[\cite{blowey2012frontiers}] \label{thm:termination}
Suppose that $f:\mathbb{R}^{n+1}\rightarrow \mathbb{R}$ is continuously differentiable and $\nabla f$ is Lipschitz continuous in $u\in\mathbb{R}^{n}$ with Lipschitz constant $\zeta(u)$, that $c\in(0,1)$ and that $p$ is a descent direction at $u$, i.e $\nabla f(u)^T p < 0$. Then the Armijo condition 
\begin{equation*}
f(u + \gamma p) \leq f(u) + c\gamma \nabla f(u)^T p
\end{equation*}
is satisfied for all $\gamma \in [0,\gamma_{\text{max}}]$ where
\begin{equation*}
\gamma_{\text{max}} = \frac{2(c - 1)\nabla f(u)^T p}{\zeta(u) ||p||^2}.
\end{equation*}
\end{theorem}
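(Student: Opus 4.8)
The plan is to prove the estimate by a first-order Taylor expansion of $f$ along the ray $\gamma\mapsto u+\gamma p$, controlling the remainder with the Lipschitz constant $\zeta(u)$, and then solving the resulting scalar inequality for $\gamma$. The endpoint $\gamma=0$ is trivial, since both sides of the Armijo condition reduce to $f(u)$, so I would assume $\gamma>0$ throughout and recover the closed endpoint at the end.

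First I would apply the fundamental theorem of calculus to the scalar function $t\mapsto f(u+t\gamma p)$ on $[0,1]$ and add and subtract the leading term $\nabla f(u)^T p$ to isolate precisely the quantity against which the Armijo condition compares:
\begin{equation*}
f(u+\gamma p)-f(u)=\gamma\int_0^1 \nabla f(u+t\gamma p)^T p\,dt=\gamma\,\nabla f(u)^T p+\gamma\int_0^1\left(\nabla f(u+t\gamma p)-\nabla f(u)\right)^T p\,dt.
\end{equation*}

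Next I would estimate the remaining integral. By Cauchy--Schwarz together with the Lipschitz continuity of $\nabla f$ with constant $\zeta(u)$, the integrand obeys $\left(\nabla f(u+t\gamma p)-\nabla f(u)\right)^T p\leq\zeta(u)\,t\gamma\|p\|^2$, and integrating in $t$ over $[0,1]$ gives the upper bound
\begin{equation*}
f(u+\gamma p)\leq f(u)+\gamma\,\nabla f(u)^T p+\tfrac{\gamma^2}{2}\zeta(u)\|p\|^2.
\end{equation*}
Comparing this with the target $f(u)+c\gamma\nabla f(u)^T p$, the Armijo condition holds as soon as $\tfrac{\gamma}{2}\zeta(u)\|p\|^2\leq(c-1)\nabla f(u)^T p$; since $c-1<0$ and $\nabla f(u)^T p<0$ by the descent hypothesis, the right-hand side is strictly positive, and dividing through by the positive factor $\tfrac{1}{2}\zeta(u)\|p\|^2$ yields exactly $\gamma\leq\gamma_{\text{max}}$ with $\gamma_{\text{max}}$ as stated.

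There is no serious obstacle in this argument; the only points requiring genuine care are the sign bookkeeping—verifying that the product $(c-1)\nabla f(u)^T p$ is positive so that $\gamma_{\text{max}}>0$ and the interval $[0,\gamma_{\text{max}}]$ is nonempty—and the division by $\gamma$, which is legitimate once the trivial $\gamma=0$ case is set aside. One could equivalently use Taylor's theorem with a Lagrange or integral remainder in place of the fundamental theorem of calculus, but the integral form makes the Lipschitz estimate most transparent and produces the constant $\tfrac12$ in $\gamma_{\text{max}}$ directly.
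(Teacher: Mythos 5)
Your proof is correct. Note that the paper does not actually prove this theorem: it is imported verbatim from the cited reference \cite{blowey2012frontiers} and used as a black box (its only role in the paper is to justify Lemma \ref{thm:finter}, the termination of the backtracking line search). So there is no internal proof to compare against; what you have written is the standard ``descent lemma'' argument --- fundamental theorem of calculus along the ray, Cauchy--Schwarz plus Lipschitz continuity of $\nabla f$ to bound the remainder by $\tfrac{\gamma^2}{2}\zeta(u)\|p\|^2$, then the sign analysis using $c-1<0$ and $\nabla f(u)^Tp<0$ to solve for $\gamma$ --- and it is exactly how this result is proved in the optimization literature. Your bookkeeping is right: the quadratic upper bound reduces the Armijo condition to $\tfrac{\gamma}{2}\zeta(u)\|p\|^2\leq (c-1)\nabla f(u)^Tp$ after dividing by $\gamma>0$, and both sides of that division are legitimate since the $\gamma=0$ case is trivial and the right-hand side is strictly positive. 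The only caveat worth flagging, which is a looseness in the theorem statement itself rather than in your argument, is that the Lipschitz bound must hold on the whole segment from $u$ to $u+\gamma p$ for $\gamma\leq\gamma_{\text{max}}$, not merely ``at $u$''; your proof implicitly assumes this, as does the original.
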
 
A step-length $\gamma_k$ for which the sufficient decrease condition \cref{eq:sufficient_decrease} holds is can be found using a so-called backtracking line search. We start by setting $\gamma_k = 1$ (which corresponds to taking a full Newton step) and check if the condition holds. If not, we decrease $\gamma$ by a factor $\tau \in (0,1)$, say $\tau = 0.9$, and check if $\gamma_k := \tau \gamma_k$ satisfies the condition. This procedure is then repeated until a suitable step-length is found. Using \cref{thm:termination} it is possible to show that this procedure eventually terminates:
\begin{lemma} \label{thm:finter}
Let $\begin{pmatrix}\Delta y_k \\\Delta\lambda_k \end{pmatrix}$ be defined as in \cref{eq:newtoneq} and let $\Delta x_{k} = V_{k}\Delta y_k$, where $V_{k}$ is the orthonormal matrix generated by Bidiag1 and denote $p_k =  \begin{pmatrix}\Delta x_k \\\Delta\lambda_k \end{pmatrix}$. Suppose $\nabla f(x,\lambda)$ is Lipschitz continuous on the level set $L = \{(x,\lambda) : ||F(x,\lambda)||  \leq ||F(x_0,\lambda_0)|| \}$ with constant $\zeta$. Then for all $k\geq1$ the backtracking line search procedure terminates with
\begin{equation} \label{eq:lower}
\gamma_k \geq \min \left(1,\frac{2(1 - c)\tau ||F(x_{k - 1},\lambda_{k-1})||^2}{ \zeta||p_k||^2}\right).
\end{equation}
 
\end{lemma}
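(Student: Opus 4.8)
The plan is to recognize the backtracking line search as a search for a step-length satisfying the Armijo condition of \cref{thm:termination}, and then to combine the guaranteed acceptance interval supplied by that theorem with the geometry of the backtracking schedule $\gamma = 1,\tau,\tau^2,\dots$ to extract the lower bound \cref{eq:lower}.

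First I would rewrite the sufficient decrease condition \cref{eq:sufficient_decrease} in Armijo form. Since $f=\tfrac12\|F\|^2$, its right-hand side equals $f(x_{k-1},\lambda_{k-1}) - c\gamma_k\|F(x_{k-1},\lambda_{k-1})\|^2$, and by \cref{thm:descent} we have $\|F(x_{k-1},\lambda_{k-1})\|^2 = -\,p_k^T\nabla f(x_{k-1},\lambda_{k-1})$. Hence \cref{eq:sufficient_decrease} is exactly $f(u+\gamma_k p_k)\le f(u)+c\gamma_k\nabla f(u)^T p_k$ with $u=(x_{k-1},\lambda_{k-1})$, which is precisely the inequality tested by the line search. \cref{thm:descent} also certifies that $p_k$ is a genuine descent direction, so \cref{thm:termination} is applicable. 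Invoking it with this $u$, $p=p_k$ and Lipschitz constant $\zeta$, and substituting $\nabla f(u)^T p_k = -\|F(x_{k-1},\lambda_{k-1})\|^2$ into the formula for $\gamma_{\text{max}}$, gives
\[
\gamma_{\text{max}} = \frac{2(1-c)\|F(x_{k-1},\lambda_{k-1})\|^2}{\zeta\|p_k\|^2},
\]
with the Armijo condition guaranteed to hold for every $\gamma\in[0,\gamma_{\text{max}}]$. I then split into two cases according to the backtracking schedule. If $\gamma_{\text{max}}\ge 1$, the initial trial $\gamma_k=1$ already satisfies the condition and the search stops with $\gamma_k=1\ge\min(1,\tau\gamma_{\text{max}})$. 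If $\gamma_{\text{max}}<1$, let $j^{*}\ge 1$ be the smallest index with $\tau^{j^{*}}\le\gamma_{\text{max}}$; since Armijo holds at $\tau^{j^{*}}$, the search terminates no later than this trial, so $\gamma_k\ge\tau^{j^{*}}$, while minimality of $j^{*}$ gives $\tau^{j^{*}-1}>\gamma_{\text{max}}$ and hence $\gamma_k\ge\tau^{j^{*}}>\tau\gamma_{\text{max}}$. Combining both cases yields $\gamma_k\ge\min(1,\tau\gamma_{\text{max}})$, which is exactly \cref{eq:lower}.

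The step I expect to require the most care is the legitimate use of the constant $\zeta$: the Taylor-expansion argument underlying \cref{thm:termination} needs $\nabla f$ to be Lipschitz along the whole trial segment from $u$ to $u+\gamma p_k$, whereas the hypothesis only furnishes a Lipschitz constant on the level set $L$. I would address this by showing the iterates stay in $L$: because every accepted step enforces \cref{eq:sufficient_decrease}, the merit $\|F\|$ decreases monotonically, so $\|F(x_{k-1},\lambda_{k-1})\|\le\|F(x_0,\lambda_0)\|$ and $(x_{k-1},\lambda_{k-1})\in L$ for all $k$, which in particular makes $\zeta$ the relevant constant at the base point $u$. Ensuring that the short trial segments probed during backtracking also remain in the region where $\zeta$ is valid is the only genuinely delicate bookkeeping; the translation to Armijo form and the backtracking count are otherwise entirely routine.
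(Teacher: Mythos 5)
Your proposal is correct and follows essentially the same route as the paper, whose proof is simply the one-line observation that the result follows by combining \cref{thm:descent} with \cref{thm:termination}; you have merely spelled out the details (the Armijo reformulation, the substitution $\nabla f(u)^Tp_k=-\|F(x_{k-1},\lambda_{k-1})\|^2$ into $\gamma_{\text{max}}$, and the backtracking case analysis yielding the factor $\tau$). Your added care about the iterates remaining in the level set $L$ is a reasonable refinement that the paper leaves implicit, but it does not constitute a different approach.
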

\begin{proof}
This easily follows from combining \cref{thm:descent,thm:termination}.
\end{proof}

\begin{remark} \label{thm:remark_fbar}
The sufficient decrease condition \cref{eq:sufficient_decrease} can be efficiently computed in the Krylov subspace. First of all, from \cref{thm:lemma1} we know how to compute the previous residual norm  using $F^{(k)}(y,\lambda)$. So what we need to know is how to compute $||F(x_{k},y_{k})||$. Let us denote $B_{k,k} \in\mathbb{R}^{k  \times k}$ for $k\geq 1 $ the square matrix containing the first $k$ columns of $B_{k + 1,k}$. We can then show similarly as in the proof of \cref{thm:lemma1} that $||F(x_{k},y_{k})|| = ||\bar{F}^{(k)}(\bar{y}_{k},\lambda_{k})||$ with
\begin{equation*}
\bar{F}^{(k )}(y,\lambda) =  \begin{pmatrix}  \lambda B_{k + 1,k+1}^T (B_{k + 1,k + 1} y - c_{k + 1}) + y \\ \frac{1}{2} ||B_{k + 1,k + 1} y - c_{k + 1}||^2 - \frac{1}{2} \epsilon^2\end{pmatrix}
\end{equation*}
Note that the only difference between $\bar{F}^{(k)}$ and $F^{(k)}$ is the multiplication with the matrix $B_{k+1,k+1}$ instead of $B_{k+1,k}$. This norm can thus be computed if we have the basis $V_{k + 1}, U_{k + 1}$ and matrix $B_{k + 1, k + 1}$ in iteration $k$. 
\end{remark}

Following the discussion above, we can now formulate the Projected Newton method, see \cref{alg:PNTM}.

\begin{algorithm}
\caption{Projected Newton method \hfill \textbf{Input:} $A$,$b$,$\lambda_0, \sigma$, tol, maxit}
\label{alg:PNTM}
\begin{algorithmic}[1]
\STATE{ $\bar{y}_0 = 0$; $x_{0} = 0$; $\tau = 0.9$; $c = 10^{-4}$; \hfill $\#$ Initialization }
\STATE{ $u_{0} = b/||b||$; $r_{0} = A^T u_{0};$ $\mu_0 = ||r_{0}||;$ $v_0 = r_0/\mu_0;$  \hfill  $\#$ Calculate $B_{1,1}, V_{1}$ and $U_{1}$}
\FOR{$k = 1,2,\ldots,\text{maxit}$} 
\STATE{ $p_{k - 1} = Av_{k - 1} - \mu_{k - 1} u_{k - 1};$}
\STATE{ Reorthogonalize $p_ {k - 1}$ w.r.t. $U_{k}$ if necessary.}
\STATE{ $\nu_{k} = ||p_{k-1}||;$ $ u_{k} = p_{k - 1}/\nu_{k};$  \hfill  $\#$ Calculate $B_{k+1,k}$ and $U_{k + 1}$}
\STATE{ $ r_{k} = A^T u_{k} - \nu_{k} v_{k - 1};$}
\STATE{ Reorthogonalize $r_k$ w.r.t. $V_k$ if necessary.}
\STATE{ $ \mu_{k} = ||r_{k}||;$ $v_{k} = r_{k}/\mu_{k};$  \hfill $\#$ Calculate $B_{k+1,k+1}$ and $V_{k + 1}$}
\STATE{ $\begin{pmatrix} \Delta y_{k}\\ \Delta \lambda_{k} \end{pmatrix} = -  J^{(k)}(\bar{y}_{k-1},\lambda_{k-1}) ^{-1} F^{(k)}(\bar{y}_{k-1},\lambda_{k-1});$  \hfill $\#$ Calculate Newton step}
\STATE{$\lambda_{k} = \lambda_{k-1} + \Delta \lambda_{k};$}   
\IF{$\lambda_{k}> 0$} 
\STATE{$\gamma_k = 1;$}
\ELSE
\STATE{$\gamma_k = -\tau (\lambda_{k-1} / \Delta \lambda_{k}) ; \lambda_{k} = \lambda_{k-1} + \gamma_k \Delta \lambda_{k};$ \hfill $\#$ Ensure positivity $\lambda_{k}$}
\ENDIF
\STATE{ $y_{k} = \bar{y}_{k-1} + \gamma_k \Delta y_{k};$ $\bar{y}_{k} = (y_{k}^T,0)^T;$}
\WHILE {$\frac{1}{2}||\bar{F}^{(k)}(\bar{y}_{k},\lambda_{k})||^2 \geq (\frac{1}{2} - c\gamma_k) || F^{(k)}(\bar{y}_{k-1},\lambda_{k-1}) || ^2$  \label{lineback}}
\STATE{ $\gamma_k = \tau \gamma_{k}$; $\lambda_{k} = \lambda_{k-1} + \gamma_k \Delta \lambda_{k};$  \hfill $\#$ Backtracking line search}
\STATE{ $y_{k} =\bar{y}_{k-1} + \gamma_k \Delta y_{k};$ $\bar{y}_{k} = (y_{k}^T,0)^T;$ }
\ENDWHILE

\IF{$\left( ||{\bar{F}}^{(k)}(\bar{y}_{k},\lambda_{k})|| \leq \text{tol} \right) $} 
\STATE{ \textbf{RETURN}  $x_k = V_ky_k,\lambda_k$}
\ENDIF
\ENDFOR
\end{algorithmic}
\end{algorithm}

\subsection{General form Tikhonov regularization}
\Cref{alg:PNTM} can be adapted to allow for general form Tikhonov regularization. In its general form, the Tikhonov problem is written as
\begin{equation}\label{eq:gentikhonov}
		x_\alpha = \argmin_{x\in\mathbb{R}^n}\frac{1}{2} \left\|Ax - b\right\|^2 + \frac{\alpha}{2}\left\|L(x - x_0)\right\|^2,
\end{equation}
with $x_0\in\mbbR^n$ an initial estimate and $L\in\mbbR^{p\times n}$ a regularization
matrix, both chosen to incorporate prior knowledge or to place specific constraints
on the solution \cite{gazzola2014_1, hansen2010}. If $L$ is a square invertible matrix, then the problem can
be written in the standard form 
\begin{equation}\label{eq:tranftikhonov}
			z_\alpha = \argmin_{z\in\mathbb{R}^n}\frac{1}{2}\left\|\bar{A}z - r_0\right\|^2 + \frac{\alpha}{2}\left\|z\right\|^2,
\end{equation}
by using the transformation
\begin{equation*}
		z = L(x - x_0), \quad \bar{A} = AL^{-1}, \quad r_0 = b - Ax_0.
\end{equation*}
When $L$ is not square invertible, some form of pseudoinverse has to be used,
but the reformulation of the problem remains the same \cite{hansen2010}.

After solving \cref{eq:tranftikhonov}, the solution of \cref{eq:gentikhonov} can be found as
\[
		x = x_0 + L^{-1}z.
\]
It is important to note that this transformation is only of practical interest if the matrix $L$ is cheaply invertible, or if the linear system $Ly =z$ can be solved efficiently. For large-scale problems, this is of course not the case for all choices of regularization matrix. When the matrix $L$ is banded an alternative way to transform \cref{eq:gentikhonov} into standard form is suggested in \cite{elden1977algorithms}. The author suggests to transform the problem by using a QR decomposition of $L^T$, which he argues can be calculated very efficiently using a sequence of plane rotations. We refer to his work for more details on this topic \cite{elden1977algorithms}. 

Unfortunately, \cref{thm:lemma1} and \cref{thm:lemma2} fail to generalize using Bidiag1 if we explicitly use the regularization term $\left\|L(x - x_0)\right\|^2$ and as a consequence the projected Newton direction does not result in a descent direction, see also \cref{thm:descent}. One idea is to use the extension of the Golub-Kahan bidiagonalization algorithm given in \cite{hochstenbach2015golub} that constructs a decomposition of a matrix pair $\{A,L\}$. This extension will be explored in future research.

\begin{remark} \label{remark_general}
In its current form, the Projected Newton method can only be used for (general form) Tikhonov regularization. However, in many applications a different regularization functional $\phi(x)$ produces better reconstructions. For instance, when it is important to preserve edges in a reconstructed image, the total variation functional $TV(x)$ is a good candidate \cite{rudin1992nonlinear,strong2003edge,tian2011low} . Another popular regularization functional is the $\ell_1$-norm $||\cdot||_1$, which is known to produce a sparse solution and is often referred to in literature as Basis Pursuit denoising \cite{chen2001atomic,wright2009sparse,gill2011crowd}. Hence, we would like to be able to solve the more general regularization problem \cref{eq:general_reg}. However, to reformulate the projected minimization problem \cref{projected_minequations1} to the more convenient form \cref{projected_minequations2} we use the fact that $\phi(x_k) = \phi(V_k y_k) = \phi(y_k)$ for the Tikhonov functional $\phi(x) = ||x||^2/2$, which is not true in general. Hence, at the current time, it is unclear how the projection step can be generalized for other regularization terms. 

One idea is to solve a series of Tikhonov problems using the Projected Newton method, where we approximate the regularization term $\phi(x)$ with a regularization term of the form $||L x ||^2_2$ and then improve the approximation of the regularization term based on the obtained solution. Every subsequent Tikhonov problem would then be a better approximation of the general regularization problem \cref{eq:general_reg}. Similar approaches have been taken in \cite{gazzola2014_1,rodriguez2006iteratively,rodriguez2008efficient}.
\end{remark}

\section{Proof of convergence} \label{sec:proofconv}

In this section we prove a convergence result for the iterates generated in \cref{alg:PNTM}. First we show that the algorithm does not break down, i.e that the Jacobian matrix is never singular.
\begin{lemma}
The Jacobian matrix $J^{(k)}(\bar{y}_{k - 1},\lambda_{k - 1})$ constructed in iteration $k$ of \cref{alg:PNTM} is nonsingular. 
\end{lemma}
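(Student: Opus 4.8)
The plan is to invoke the block-structure criterion derived just after \cref{eq:jacf} and restated for the projected Jacobian around \cref{eq:noteq}: since the algorithm enforces $\lambda_{k-1}>0$, the block $\lambda_{k-1}B_{k+1,k}^T B_{k+1,k}+I_k$ is positive definite, and therefore $J^{(k)}(\bar y_{k-1},\lambda_{k-1})$ is nonsingular if and only if the vector $B_{k+1,k}^T(B_{k+1,k}\bar y_{k-1}-c_{k+1})$ is nonzero. Thus the whole argument reduces to verifying two facts: that $\lambda_{k-1}>0$ holds at every iteration, and that $B_{k+1,k}^T t_{k+1}\neq 0$, where $t_{k+1}=B_{k+1,k}\bar y_{k-1}-c_{k+1}$.

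First I would dispose of the positivity of $\lambda_{k-1}$ by induction. The input supplies $\lambda_0>0$, and in iteration $k$ the algorithm either accepts $\lambda_k=\lambda_{k-1}+\Delta\lambda_k$ only when this is positive, or otherwise sets $\gamma_k=-\tau(\lambda_{k-1}/\Delta\lambda_k)$, which yields $\lambda_k=\lambda_{k-1}+\gamma_k\Delta\lambda_k=(1-\tau)\lambda_{k-1}>0$ because $\tau\in(0,1)$; the backtracking loop only shrinks $\gamma_k$ further, preserving positivity. Hence $\lambda_{k-1}>0$ for all $k\ge 1$, so the positive-definiteness hypothesis is met.

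Next I would translate the condition $B_{k+1,k}^T t_{k+1}\neq 0$ back into the original variables, exactly as in the proof of \cref{thm:lemma2}. Since $x_{k-1}=V_k\bar y_{k-1}$ and $b=U_{k+1}c_{k+1}$, \cref{eq:avub} gives $Ax_{k-1}-b=U_{k+1}t_{k+1}$, and then \cref{eq:atu} together with the fact that the last entry of $t_{k+1}$ vanishes (its last coordinate is $\nu_k(\bar y_{k-1})_k-0=0$) yields $A^T(Ax_{k-1}-b)=V_kB_{k+1,k}^T t_{k+1}$. Because $V_k$ has orthonormal columns, $\|A^T(Ax_{k-1}-b)\|=\|B_{k+1,k}^T t_{k+1}\|$, so it suffices to prove $A^T(Ax_{k-1}-b)\neq 0$.

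The main obstacle, and the crux of the proof, is this last nonvanishing, which I would establish from the no-breakdown assumption of \cref{thm:breakdown}. Since the last coordinate of $\bar y_{k-1}$ is zero, $x_{k-1}=V_{k-1}y_{k-1}\in\mathcal{R}(V_{k-1})=\mathcal{K}_{k-1}(A^TA,A^Tb)$, so $x_{k-1}=q(A^TA)A^Tb$ for some polynomial $q$ of degree at most $k-2$. If $A^T(Ax_{k-1}-b)=0$, then $(A^TA)q(A^TA)A^Tb=A^Tb$, i.e. $p(A^TA)A^Tb=0$ with $p(t)=tq(t)-1$ a polynomial of degree at most $k-1$ satisfying $p(0)=-1\neq 0$. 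This is a nontrivial linear dependence among the vectors $A^Tb,(A^TA)A^Tb,\dots,(A^TA)^{k-1}A^Tb$, contradicting the fact that, absent a breakdown through step $k$, these $k$ vectors are linearly independent and span the $k$-dimensional subspace $\mathcal{K}_k(A^TA,A^Tb)$. Hence $A^T(Ax_{k-1}-b)\neq 0$, so $B_{k+1,k}^T t_{k+1}\neq 0$, and the nonsingularity of $J^{(k)}(\bar y_{k-1},\lambda_{k-1})$ follows. The base case $k=1$ is the special instance $x_0=0$, where the condition reduces to $A^Tb\neq 0$, which is guaranteed since $\mu_0=\|A^Tb\|/\|b\|>0$ by the no-breakdown assumption.
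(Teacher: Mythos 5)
Your proof is correct, but its core differs genuinely from the paper's. Both arguments start identically: positivity of $\lambda_{k-1}$ (which you verify more carefully than the paper, which merely notes it is enforced by the step-length rule) reduces everything to the nonvanishing condition \cref{eq:noteq}. From there the paper stays entirely in the projected space: since the last entry of $t_{k+1}=B_{k+1,k}\bar{y}_{k-1}-c_{k+1}$ vanishes, $\|B_{k+1,k}^T t_{k+1}\|=\|B_{k+1,k+1}^T t_{k+1}\|$, and $B_{k+1,k+1}^T$ has trivial nullspace under the no-breakdown assumption, so it suffices that $t_{k+1}\neq 0$; this is then proved by induction along the algorithm's own history, using the last row of the Newton system \cref{eq:last_comp} together with $\gamma_{k-1}\leq 1$ to establish the invariant $\|B_{k+1,k}\bar{y}_{k-1}-c_{k+1}\|\geq\sigma$. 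You instead pull the condition back to the full space through the identity $\|B_{k+1,k}^T t_{k+1}\|=\|A^T(Ax_{k-1}-b)\|$ (the same computation as in \cref{thm:lemma2}) and dispose of it with a polynomial argument: any $x_{k-1}\in\mathcal{K}_{k-1}(A^TA,A^Tb)$ gives $A^T(Ax_{k-1}-b)=p(A^TA)A^Tb$ with $p(0)=-1$ and $\deg p\leq k-1$, which cannot vanish because the no-breakdown assumption makes $\mathcal{K}_k(A^TA,A^Tb)$ exactly $k$-dimensional. Each route buys something. Yours is independent of how the iterates were generated: it shows nonsingularity at \emph{every} point $\bigl((y^T,0)^T,\lambda\bigr)$ with $\lambda>0$ and arbitrary $y\in\mathbb{R}^{k-1}$, requires no induction over the Newton/line-search history beyond $\lambda$-positivity, and avoids the implicit assumption $\|b\|\geq\sigma$ on which the paper's base case $\|c_2\|=\|b\|\geq\sigma$ rests. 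The paper's induction, in exchange, delivers a by-product of independent algorithmic interest that your argument does not: the iterates satisfy $\|Ax_k-b\|\geq\sigma$ for all $k$, i.e., the residual approaches the discrepancy target monotonically from above.
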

\begin{proof}
Since we enforce positivity of the regularization parameter $\lambda_{k-1}$ the only thing we need to show is that 
\begin{equation}
B_{k + 1,k}^T(B_{k + 1,k} \bar{y}_{k-1} - c_{k+1}) \neq 0.
\end{equation}
Using the fact that the last component of $\bar{y}_{k-1}$ is zero, we can write:
\begin{equation*}
 ||B_{k + 1,k}^T(B_{k + 1,k} \bar{y}_{k-1} - c_{k+1})|| = ||B_{k + 1,k+1}^T(B_{k + 1,k} \bar{y}_{k-1} - c_{k+1})||. 
\end{equation*}
Recall that $B_{k + 1,k+1}^T$ has a trivial nullspace since we assume the bidiagonalization procedure does not break down, see \cref{thm:breakdown}. Hence, what remains to be shown is that $||B_{k + 1,k} \bar{y}_{k-1} - c_{k+1}||>0$ for all $k\geq 1$. In fact, we will prove by induction that $||B_{k + 1,k} \bar{y}_{k-1} - c_{k+1}||\geq\sigma$ for all $k$. For $k=1$ we have 
\begin{equation}
||B_{2,1} \bar{y}_{0} - c_{2}|| = ||c_{2}|| = ||b||\geq\sigma.
\end{equation}
Let us now consider $k>1$ and assume  $||B_{k,k-1}\bar{y}_{k-2} - c_{k}|| \geq \sigma$ which implies the previous Jacobian $J^{(k - 1)}(\bar{y}_{k - 2},\lambda_{k - 2})$ was nonsingular. This allows us to solve the linear system
\begin{equation*}
\begin{pmatrix} \lambda_{k-1} B_{k,k-1}^T B_{k,k-1} + I_{k-1} &  B_{k,k-1}^T( B_{k,k-1}\bar{y}_{k-2} - c_{k}) \\  ( B_{k,k-1} \bar{y}_{k-2}  - c_{k})^T B_{k,k-1} & 0 \end{pmatrix}\begin{pmatrix} \Delta y_{k - 1} \\\Delta \lambda_{k-1} \end{pmatrix} = -F^{(k-1)}(\bar{y}_{k-2},\lambda_{k-2}).
\end{equation*}
Writing out the last component of this equality, we get
\begin{equation} \label{eq:last_comp}
 ( B_{k,k-1}\bar{y}_{k-2} - c_{k})^T B_{k,k-1}\Delta y_{k-1} = - \frac{1}{2}( ||B_{k,k-1}\bar{y}_{k-2} - c_{k}||^2 - \sigma^2) \leq 0
\end{equation}
where the inequality follows from the induction hypothesis. Now let $y_{k-1} = \bar{y}_{k-2} + \gamma_{k-1}\Delta y_{k-1}$ then we have
\begin{align*}
&||B_{k,k-1}y_{k-1}- c_{k}||^2 = ||B_{k,k-1}\bar{y}_{k-2} - c_{k} + \gamma_{k-1}B_{k,k-1}\Delta y_{k-1}||^2 \\
&= ||B_{k,k-1}\bar{y}_{k-2} - c_{k}||^2 + 2\gamma_{k-1}(B_{k,k-1}\bar{y}_{k-2} - c_{k})^T B_{k,k-1}\Delta y_{k-1} + \gamma_{k-1}^2||B_{k,k-1}\Delta y_{k-1}||^2 \\
&\geq ||B_{k,k-1}\bar{y}_{k-2} - c_{k}||^2 + 2(B_{k,k-1}\bar{y}_{k-2} - c_{k})^T B_{k,k-1}\Delta y_{k-1} + \gamma_{k-1}^2||B_{k,k-1}\Delta y_{k-1}||^2 \\
&\stackrel{\cref{eq:last_comp}}{=} \sigma^2  + \gamma_{k-1}^2||B_{k,k-1}\Delta y_{k-1}||^2.
\end{align*}
So we know that $||B_{k + 1,k} \bar{y}_{k-1} - c_{k+1}||=||B_{k,k-1}y_{k-1}- c_{k}|| \geq \sigma$ which concludes the proof.
\end{proof}

We can now prove the following convergence theorem, which can be seen as a corollary of Theorem $2.2$ in \cite{blowey2012frontiers}:
\begin{theorem}\label{thm:convthm}
Let $\{(x_{k},\lambda_{k})\}_{k\in\mathbb{N}}$ be the iterates generated by \cref{alg:PNTM}. Suppose the lower bound \cref{eq:lower} in \cref{thm:finter} holds for all $k$, then one of the following statements is true:
\begin{eqnarray*}
&& \bullet \hspace{0.5cm} || F(x_k,\lambda_k) || = 0 \hspace{0.5cm}\text{for some } \hspace{0.5cm} k\geq0 \\
&&  \bullet \hspace{0.5cm} \lim_{k\rightarrow \infty} \min \left(||F(x_k,\lambda_k)||,\frac{||F(x_k,\lambda_k)||^2}{||p_{k+1}||}\right)=0. 
\end{eqnarray*}
\end{theorem}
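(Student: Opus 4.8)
The plan is to treat this as a standard global-convergence argument for a line-search method applied to the merit function $f(x,\lambda) = \frac{1}{2}\|F(x,\lambda)\|^2$, using the sufficient-decrease guarantee already assembled in the excerpt. The key observation is that \cref{alg:PNTM} produces, at each step $k$, a descent direction $p_k = (\Delta x_k^T,\Delta\lambda_k)^T$ for $f$ at $(x_{k-1},\lambda_{k-1})$ (by \cref{thm:descent}) together with a step-length $\gamma_k$ satisfying the Armijo condition \cref{eq:sufficient_decrease}, whose existence and lower bound \cref{eq:lower} are furnished by \cref{thm:finter}. Substituting the descent identity $\nabla f(x_{k-1},\lambda_{k-1})^T p_k = -\|F(x_{k-1},\lambda_{k-1})\|^2$ into the Armijo condition yields exactly the telescoping inequality \cref{eq:sufficient_decrease}, namely
\begin{equation*}
\tfrac{1}{2}\|F(x_k,\lambda_k)\|^2 \le \left(\tfrac{1}{2} - c\gamma_k\right)\|F(x_{k-1},\lambda_{k-1})\|^2,
\end{equation*}
so the sequence $\{f(x_k,\lambda_k)\}$ is monotonically nonincreasing and bounded below by $0$, hence convergent to some limit $f^\ast \ge 0$.

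First I would dispose of the easy case: if $\|F(x_k,\lambda_k)\| = 0$ for some finite $k$, the first alternative holds and we are done, so assume $\|F(x_{k-1},\lambda_{k-1})\| > 0$ for all $k$, which legitimizes every step-length selection. Next I would rearrange the Armijo inequality to isolate the decrease, obtaining $c\gamma_k\|F(x_{k-1},\lambda_{k-1})\|^2 \le f(x_{k-1},\lambda_{k-1}) - f(x_k,\lambda_k)$. Summing over $k$ telescopes the right-hand side, and since $f$ is bounded below this shows $\sum_k \gamma_k \|F(x_{k-1},\lambda_{k-1})\|^2 < \infty$, so the summand tends to zero:
\begin{equation*}
\lim_{k\to\infty} \gamma_k \|F(x_{k-1},\lambda_{k-1})\|^2 = 0.
\end{equation*}
The final step is to convert this product-limit into the claimed minimum. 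I would substitute the lower bound \cref{eq:lower} for $\gamma_k$: whenever the minimum in \cref{eq:lower} is attained by $1$, the factor $\gamma_k$ is bounded below by a constant, forcing $\|F(x_{k-1},\lambda_{k-1})\|^2 \to 0$ along that subsequence; whenever it is attained by the other term $2(1-c)\tau\|F(x_{k-1},\lambda_{k-1})\|^2/(\zeta\|p_k\|^2)$, the product becomes a constant multiple of $\|F(x_{k-1},\lambda_{k-1})\|^4/\|p_k\|^2$, so its vanishing forces $\|F(x_{k-1},\lambda_{k-1})\|^2/\|p_k\| \to 0$. Combining both cases yields $\liminf_k \min\bigl(\|F\|,\|F\|^2/\|p_k\|\bigr) = 0$; upgrading the $\liminf$ to a genuine limit uses the monotonicity of $\|F(x_k,\lambda_k)\|$ established above, which controls the first argument globally.

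The main obstacle I anticipate is the bookkeeping in the last step, specifically handling the case split in the lower bound \cref{eq:lower} cleanly and making sure the index shift between $\|F(x_{k-1},\lambda_{k-1})\|$ and $\|p_k\|$ is tracked consistently so that the limiting quantity matches the theorem's $\min(\|F(x_k,\lambda_k)\|,\|F(x_k,\lambda_k)\|^2/\|p_{k+1}\|)$ statement. Since the theorem is explicitly framed as a corollary of Theorem $2.2$ in \cite{blowey2012frontiers}, I would lean on that reference for the abstract line-search machinery and restrict the proof to verifying its hypotheses --- descent direction, valid Armijo step, and the Lipschitz continuity of $\nabla f$ on the level set $L$ assumed in \cref{thm:finter} --- rather than reproving the summation argument from scratch.
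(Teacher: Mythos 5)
Your proposal is correct and follows essentially the same route as the paper's proof: telescoping the Armijo decrease \cref{eq:sufficient_decrease} to get $\sum_k \gamma_k\|F(x_{k-1},\lambda_{k-1})\|^2 < \infty$, deducing $\gamma_k\|F(x_{k-1},\lambda_{k-1})\|^2 \to 0$, and then splitting indices according to the lower bound \cref{eq:lower} (your split on which term attains the minimum is equivalent to the paper's partition into $\{k:\gamma_k=1\}$ and $\{k:\gamma_k<1\}$, since $\gamma_k\leq 1$ always). The paper likewise presents this as a direct modification of Theorem 2.2 in \cite{blowey2012frontiers}, so your plan to lean on that reference matches its strategy exactly.
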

\begin{proof}
The proof of this theorem is a direct modification of the proof of Theorem $2.2$ in \cite{blowey2012frontiers}.
Suppose $|| F(x_k,\lambda_k) ||\neq 0$ for all $k$, since otherwise we are done. Then the search directions produce a descent direction as shown in \cref{thm:descent}. From the backtracking line search it follows that
\begin{equation*}
||F(x_{j+1},\lambda_{j+1})||^2 - ||F(x_{j},\lambda_{j})||^2 \leq  -2c \gamma_{j+1} ||F(x_{j},\lambda_{j})||^2
\end{equation*}
for all $j\leq k$. Taking the sum over all previous iterations gives us the following inequality
\begin{equation*}
||F(x_{k+1},\lambda_{k+1})||^2 - ||F(x_{0},\lambda_{0})||^2 \leq  -2c \sum_{j=0}^{k}\gamma_{j+1} ||F(x_{j},\lambda_{j})||^2.
\end{equation*}
By rearranging the terms, taking the limit and using the fact that $||F(x_{k+1},\lambda_{k+1})||\geq 0 $ for all $k$ we get
\begin{equation*}
 2c \sum_{j=0}^{\infty}\gamma_{j+1}||F(x_{j},\lambda_{j})||^2 \leq \lim_{k\rightarrow \infty} \left[ ||F(x_{0},\lambda_{0})||^2 - ||F(x_{k+1},\lambda_{k+1})||^2 \right] < \infty
\end{equation*}
which shows we have a bounded series. Moreover, all terms in the series are positive, which shows $\sum_{j=0}^{\infty}\gamma_{j+1} ||F(x_{j},\lambda_{j})||^2$ is convergent, from which it follows that
\begin{equation}\label{eq:lim}
 \lim_{k\rightarrow \infty} \gamma_{k+1}|| F(x_k,\lambda_k) ||^2 = 0.
\end{equation}
Let us consider a partition of $\mathbb{N}=\mathcal{N}_1 \cup \mathcal{N}_2$ with
\begin{equation*}
\mathcal{N}_1 = \{k : \gamma_k = 1\} \hspace{0.5cm}\text{and} \hspace{0.5cm} \mathcal{N}_2 = \{k : \gamma_k < 1\}.
\end{equation*}
From \cref{eq:lim} it now follows for the first index set that
\begin{equation*}
\lim_{k\in\mathcal{N}_1\rightarrow \infty} || F(x_k,\lambda_k) ||^2 = \lim_{k\in\mathcal{N}_1\rightarrow \infty} \gamma_{k+1}|| F(x_k,\lambda_k) ||^2 = 0.
\end{equation*}
So let us take a closer look at the second index set for which we also have
\begin{equation*}
\lim_{k\in\mathcal{N}_2\rightarrow \infty} \gamma_{k+1}|| F(x_k,\lambda_k) ||^2 = 0.
\end{equation*}
 For $k\in\mathcal{N}_2$ we have using \cref{eq:lower} that 
\begin{equation*}
0\leq \frac{2(1 - c)\tau ||F(x_{k},\lambda_{k})||^4}{ \zeta||p_{k+1}||^2} \leq \gamma_{k+1}|| F(x_k,\lambda_k) ||^2
\end{equation*}
from which we get
\begin{equation*}
\lim_{k\in\mathcal{N}_2\rightarrow \infty} \frac{||F(x_k,\lambda_k)||^2}{||p_{k+1}||}=0.
\end{equation*}
The proof now follows from combining the result for both index sets. 
\end{proof}
\begin{remark}
The limit in \cref{thm:convthm} can be simplified if $||J^{(k)}(\bar{y}_{k-1},\lambda_{k-1})^{-1}|| \leq M$ for some constant independent of $k$. Indeed, since 
\begin{eqnarray*}
\| p_{k} \| &=&\| \begin{pmatrix} \Delta x_{k} \\ \Delta \lambda_{k} \end{pmatrix} \| = \| \begin{pmatrix} \Delta y_{k} \\ \Delta \lambda_{k} \end{pmatrix} \| \\ &=& || J^{(k)}(\bar{y}_{k-1},\lambda_{k-1}) ^{-1} F^{(k)}(\bar{y}_{k-1},\lambda_{k-1})|| \\
&\leq&  M|| F^{(k)}(\bar{y}_{k-1},\lambda_{k-1})|| = M||F(x_{k-1},\lambda_{k-1})|| 
\end{eqnarray*}
This allows us to write
\begin{equation}
\frac{||F(x_{k-1},\lambda_{k-1})||^2}{||p_{k}||}\geq \frac{||F(x_{k-1},\lambda_{k-1})||^2}{M||F(x_{k-1},x_{k-1})||} = \frac{1}{M}||F(x_{k-1},\lambda_{k-1})||\geq 0
\end{equation}
from which we can conclude that
\begin{equation}
\lim_{k\rightarrow \infty} \min \left(||F(x_k,\lambda_k)||,\frac{||F(x_k,\lambda_k)||^2}{||p_{k+1}||}\right) =\lim_{k\rightarrow \infty} ||F(x_k,\lambda_k)|| = 0. \end{equation}
\end{remark}
\section{Numerical experiments} \label{sec:numex}
In this section we report the results of some numerical experiments with test problems from image deblurring and computed tomography. Moreover, to thoroughly test the robustness of the Projected Newton method, we apply the algorithm to 164 matrices from the SuiteSparse matrix collection \cite{davis2011}. We start this section by explaining another Krylov subspace method for automatic regularization based on the discrepancy principle, namely Generalized bidiagonal-Tikhonov, which we use to compare our newly developed method. 
\subsection{Reference method: Generalized bidiagonal-Tikhonov}
In \cite{gazzola2014_1, gazzola2014_2, gazzola2015krylov} a generalized Arnoldi-Tikhonov
method (GAT) was introduced that iteratively solves the Tikhonov problem \eqref{eq:min_tikhonov}
using a Krylov subspace method based on the Arnoldi decomposition of the matrix $A$.
Simultaneously, after each Krylov iteration, the regularization parameter is updated
in order to approximate the value for which the discrepancy is equal to $\eta\varepsilon$.
This is done using one step of the secant method to find the intersection of the
discrepancy curve with the tolerance for the discrepancy principle, see \cref{fig:curves}, but in the current Krylov subspace. Because the method is based on
the Arnoldi decomposition, it only works for square matrices. However, by replacing the Arnoldi decomposition
with the bidiagonal decomposition we used in \cref{sec:projected_newton_method} the method can
be adapted to non-square matrices.

The update for the regularization parameter is done based on the regularized and
the non-regularized residual. Let, in the $k$th iteration, $z_k$ be the solution
without regularization, i.e. $\alpha = 0$, and $y_k$ the solution with the
current best regularization parameter, i.e. $\alpha = \alpha_{k - 1}$. We can then update the regularization parameter using
\begin{equation}\label{eq:aup}
		\alpha_k = \left|\frac{\eta \varepsilon - r(z_k)}{r(y_k) - r(z_k)}\right|\alpha_{k - 1}.
\end{equation}
where $r(z_k) = ||B_{k+1,k} z_k - c_{k+1}||$ and $r(y_k) = ||B_{k+1,k} y_k - c_{k+1}|| $ are the residuals.
A brief sketch of this method is given in \cref{alg:gbit}, but for more information we
refer to \cite{gazzola2014_1, gazzola2014_2, gazzola2014_3}. Note that in the
original GAT method, the non-regularized iterates $z_k$ are equivalent to the
GMRES \cite{saad1986gmres} iterations for the solution of $Ax = b$. Now, because the Arnoldi
decomposition is replaced with the bidiagonal decomposition, they are equivalent
to the LSQR iterations for the solution of $Ax = b$.

\begin{remark}
We use the same stopping criterion in \cref{alg:gbit} as in \cref{alg:PNTM} for a fair comparison between both methods. Note however that since GBiT is based on the standard formulation of the Tikhonov problem \eqref{eq:min_tikhonov}, we have to invert the parameter $\alpha$. Moreover, evaluating this norm would require two additional matrix vector products in each iteration, which is a computationally expensive addition to the algorithm. In an actual implementation GBiT would use a different stopping criterion. 
\end{remark}

\begin{remark}
Note the difference between GBiT (and by extension GAT) and Projected Newton. 
While both methods use increasingly larger Krylov subspaces to solve the Tikhonov problem \eqref{eq:min_tikhonov} and calculate a suitable regularization parameter using the discrepancy principle, the way they do so is different. GBiT solves the projected Tikhonov normal equations \cref{eq:lin_tikhonov} in each Krylov subspace using a fixed
regularization parameter and only afterwords updates the regularization parameter for the next Krylov iteration using \eqref{eq:aup}. The Projected Newton method performs a single iteration of Newton to simultaneously update the solution and regularization parameter using \eqref{eq:newtoneq} and then expands the Krylov subspace basis.
\end{remark}

\begin{algorithm}
		\caption{Generalized bidiagonal-Tikhonov (GBiT) \hfill \textbf{Input:} $A$,$b$,$\alpha_0, \sigma$, tol}\label{alg:gbit}
		\begin{algorithmic}[1]
				\STATE{$x_0 = 0; \quad k = 1; $}
						\WHILE{$\left\|F\left(x_{k-1}, \frac{1}{\alpha_{k-1}}\right)\right\| \geq$ tol}
						\STATE{ Calculate $U_{k + 1}$, $B_{k + 1, k}$ and $V_k$ using Bidiag1}
						\STATE{ Solve $B_{k+1,k}^T B_{k+1,k} z_k = B_{k+1,k}^T c_{k+1}$ for $z_k$.}
						\STATE{ Solve  $(B_{k+1,k}^T B_{k+1,k}+\alpha_{k-1} I_k) y_k = B_{k+1,k}^T c_{k+1}$ for $y_k$.}
						\STATE{ Calculate $\alpha_k$ using \eqref{eq:aup}.}
						\STATE{ $x_k = V_k y_k; \quad k = k + 1;$}
				\ENDWHILE
		\end{algorithmic}
\end{algorithm}

\subsection{Image deblurring} \label{sec:deblurring}

\begin{figure}
\begin{center}
\includegraphics[width=1\textwidth]{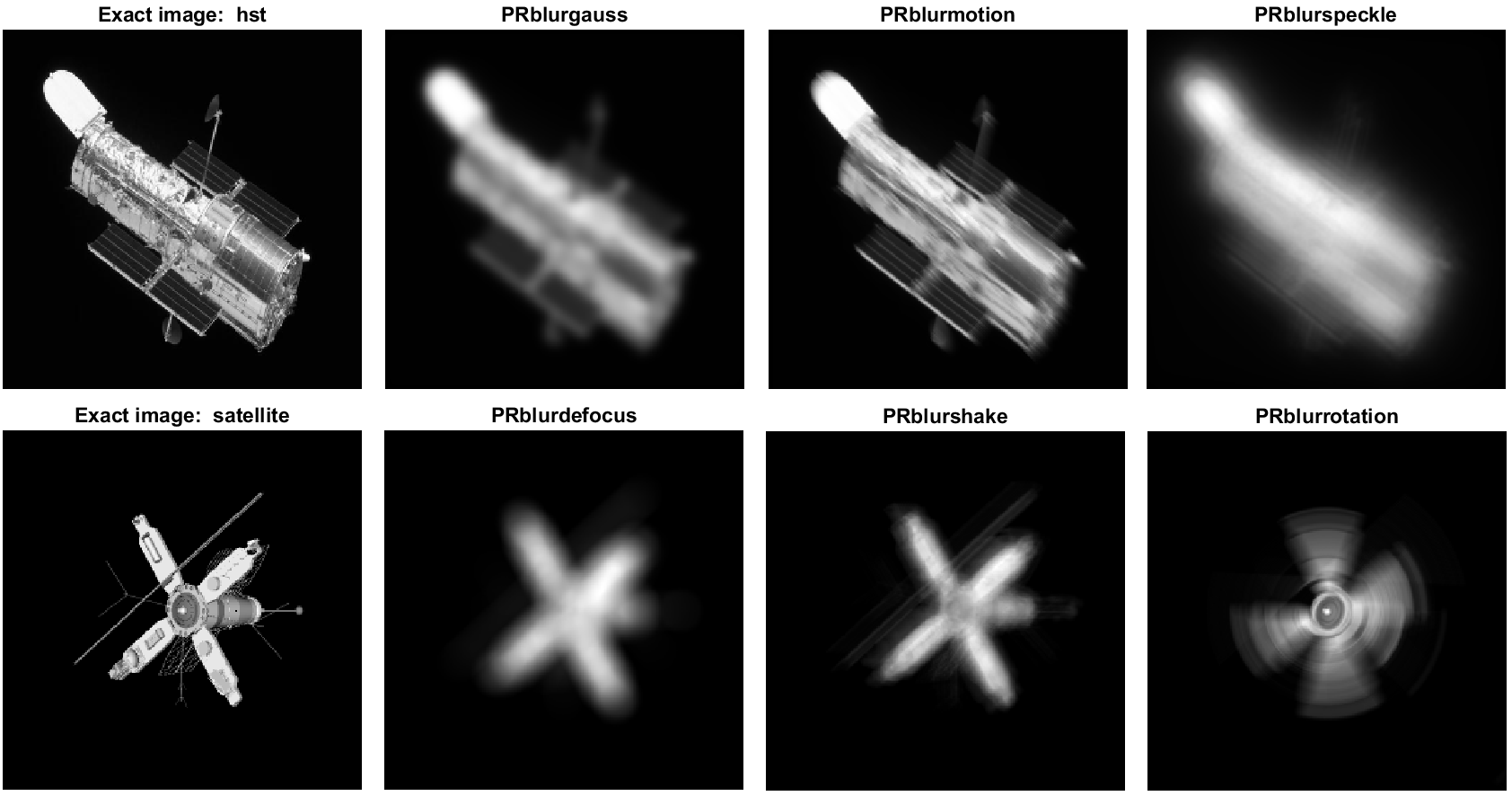}
\end{center}
\caption{The images ``\texttt{hst}'' and ``\texttt{satellite}'', which represent the exact solution $x_{ex}$, together with the distorted images, which represent the exact (i.e. noise-free) right-hand side $b_{ex}$, generated by the different blurring functions from the IR tools package \cite{gazzola2019ir}. } \label{fig:blur}
\end{figure}

Image deblurring is a rich source of linear inverse problems. For example in astronomy, when a ground-based telescope takes a picture of an object in space, the image is typically blurred due to the rapidly changing index of refraction of the atmosphere. Extraterrestrial photographs taken of earth are typically degraded by motion blur due to the slow camera shutter speed relative to the fast spacecraft motion \cite{banham1997digital}. A post-processing phase is then necessary to improve the quality of the picture. Other examples include microscopy, crowd surveillance, positon emmision tomography and many more, see for instance \cite{katsaggelos2012digital,bertero1998introduction,hansen2006deblurring}. 

We use the matlab package IR tools \cite{gazzola2019ir} for generating test problems. This package contains several functions that generate  a matrix $A$, which models the blurring operator in different scenarios, and a corresponding right-hand side $b_{ex}$, which is a distorted version of the exact image. The function \texttt{PRblurrotation}, for example, generates data for an image deblurring problem where the blur simulates rotational motion blur. We consider the functions \texttt{PRblurgauss}, \texttt{PRblurmotion} and \texttt{PRblurspeckle} applied to the exact image ``\texttt{hst}'' and the functions \texttt{PRblurdefocus}, \texttt{PRblurshake} and \texttt{PRblurrotation} applied to the image ``\texttt{satellite}'', see \cref{fig:blur}. These figures are also part of the IR tools package. For more information we refer the reader to \cite{gazzola2019ir}. 

We solve the deblurring test problems described above using GBiT, the Lagrange method and the Projected Newton method with tolerance $10^{-8}$. We apply reorthogonalization to the bidiagonalization procedure in both GBiT and Projected Newton. We set  
$\alpha_0 = \lambda_0 = 1$ as initial regularization parameter and set the maximum number of iterations to $500$. We take test images of size $256 \times 256$ and add $10\%$ Gaussian noise to the right-hand side $b_{ex}$. For simplicity, we solve the Newton system \cref{eq:newton_system} for the Lagrange method with a fixed precision of $10^{-6}$ using the Krylov subspace method MINRES and put the maximum number of iterations of MINRES equal to $100$. The results of the experiment can be found in the top half of \cref{table}. While the number of Newton iterations for the Lagrange method is quite small, the total number of matrix vector product is large since each Krylov iteration requires a matrix vector product with $A$ and $A^T$. Moreover, the backtracking line search also requires two matrix vector products each time the step-length is reduced. It can also be observed the the number of iterations for GBiT and Projected Newton seem to be similar. Note that both methods require two matrix vector products in each iteration and one matrix vector product for initialization. This assumes that we also use the projected function to check for convergence in GBiT, otherwise we get an additional two matrix vector products each iteration. See \cref{fig:blurconvergence} for an example of the convergence history of all three methods in function of the number of matrix vector products. 

\begin{table}
\scalebox{0.95}{
\begin{tabular}{l||rr|rrr|rr|rr}
						 &&& \multicolumn{3}{c|}{Lagrange} & \multicolumn{2}{c|}{GBiT} & \multicolumn{2}{c}{PN} \\
						 \textbf{Problem} & $m$ & $n$ & $\#N$ & $\#\bar{K}$ & \textbf{MV} & $\#K$ &  \textbf{MV} & $\#K$ &  \textbf{MV} \\ \hline \hline
						\texttt{PRblurgauss} & 65,536 & 65,536  & 11&38.8 & 876& 100 & 201 &100& 201 \\
						\texttt{PRblurmotion} & 65,536  & 65,536   &8& 26 & 434& 54 & 109& 54& 109 \\
						\texttt{PRblurspeckle} & 65,536  & 65,536   &11& 38.6 & 876& 86 & 173& 86& 173 \\
						\texttt{PRblurdefocus} &65,536  & 65,536  & 12& 56.8 & 1396& 133 & 267& 132& 265 \\
						\texttt{PRblurshake} & 65,536 & 65,536  &9&28.2 & 528 & 71& 143& 71 & 143 \\
						\texttt{PRblurrotation}& 65,536  & 65,536  &10 & 39.1 & 804& 98 & 197 & 98 & 197 \\ \hline \hline
						\texttt{shepp128} & 23,040 & 16,384 & 14 & 56 & 1666& 50 & 101& 50& 101 \\
						\texttt{shepp256} & 92,160 & 65,536 &16 & 56.2 & 1936& 54 & 109& 54& 109 \\
						\texttt{grains128} & 23,040 & 16,384 &13 & 43 & 1173& 36 & 73& 32& 65 \\
						\texttt{grains256} & 92,160 & 65,536 &15& 47 & 1490 & 51 & 103& 38& 77 \\ \hline \hline
\end{tabular} }
\\
\caption{Results of the numerical experiments as explained in \cref{sec:deblurring} (top) and \cref{sec:ct} (bottom). For the Lagrange method the column $\#N$ denotes the number of Newton iterations, while $\#\bar{K}$ denotes the average number of Krylov iterations per Newton iteration. For GBiT and the Projected Newton method (PN) $\#K$ denotes the number of Krylov subspace iterations. The column \textbf{MV} gives the total number of matrix vector products for each of the methods. This includes the matrix vector products necessary for the backtracking line search in the Lagrange method.} \label{table}
\end{table}

\begin{figure}
\begin{center}
\includegraphics[width=1\textwidth]{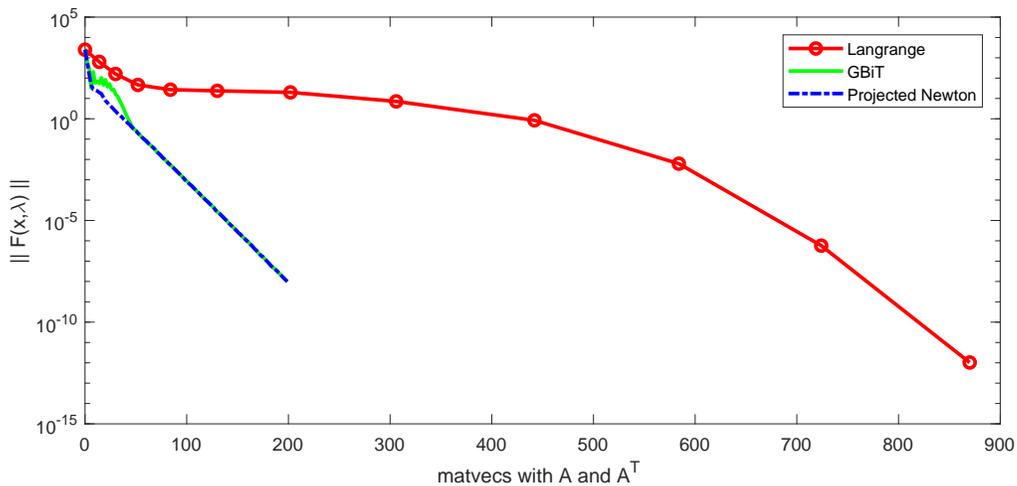}
\end{center}
\caption{Convergence history of \texttt{PRblurgauss} for the Lagrange method, Projected Newton and GBiT in function of the number of matrix vector products. For the Lagrange method, the circles denote the Newton iterations.\label{fig:blurconvergence}}
\end{figure}

\subsection{Computed tomography} \label{sec:ct}

\begin{figure}
\begin{center}
\begin{tabular}{cc}
\includegraphics[width=0.6\textwidth]{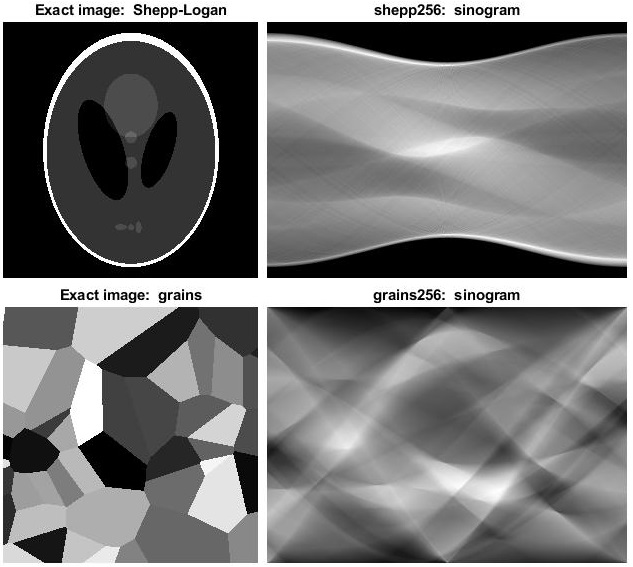}
\end{tabular}
\end{center}
\caption{The modified Shepp--Logan phantom (top) and grains image (bottom) of size $256 \times 256$ as exact solution $x_{ex}$ \cite{hansen2012air,hansen2018air} and corresponding sinogram $b_{ex}$ with $360$ projection angles in $[0, \pi[$.\label{fig:ct} }
\end{figure}

As a second class of test problems we consider x-ray computed tomography \cite{hsieh2009computed}. Here,
the goal is to reconstruct the attenuation factor of an object based on the loss
of intensity in the x-rays after they passed through the object. Classically, the
reconstruction is done using analytical methods based on the Fourier and Radon
transformations \cite{mallat2009}. In the last decades interest has grown in algebraic reconstruction
methods due to their flexibility when it comes to incorporating prior knowledge and
handling limited data. Here, the problem is written as a linear system $Ax = b$, where
$x$ represents the attenuation of the object in each pixel, the right-hand side $b$
is related to the intensity measurements of the x-rays and $A$ is a projection matrix.
The precise structure of $A$ depends on the experimental set-up, but it is typically
very sparse. For more information we refer to \cite{joseph1982, hansen2010, siltanen2012}.

For our computed tomography experiments we consider a parallel beam geometry and use the ASTRA toolbox
\cite{aarle2015, aarle2016} in order to generate the projection matrix $A$. To generate the test images we use the AIR tools package \cite{hansen2012air,hansen2018air}.
As a first test problem we take the modified Shepp--Logan phantom of size $128 \times 128$
and take $180$ projection angles in $[0, \pi[$, which corresponds to a matrix $A$
of size $23,040\times 16,384$. We consider the same experimental set-up but with the Shepp--Logan phantom of size  $256 \times 256$ and with $360$ projection angles  in $[0, \pi[$. We denote these two test problems as \texttt{shepp128} and \texttt{shepp256} respectively. For a third and fourth test problem we take the image \texttt{grains} as exact solution and again consider problem sizes $128 \times 128$ and $256\times 256$ with $180$ and $360$ projection angles respectively. We denote these problems as \texttt{grains128} and \texttt{grains256}. The exact solution $x_{ex}$  and the exact (noise-free) right-hand side $b_{ex}$, typically called a sinogram in computed tomography, for the problems \texttt{shepp256} and \texttt{grains256} are shown in \cref{fig:ct}. We again solve these test problems with the Lagrange method, Projected Newton and GBiT and use the same parameters as in \cref{sec:deblurring}. The results of the experiment are shown in the bottom half of \cref{table}. We can observe that the Lagrange method is not competitive compared to the other two algorithms in terms of matrix vector products. For the problems with the Shepp--Logan phantom we have the same number of Krylov iterations for GBiT and Projected Newton. However, for the grains test problems, the latter algorithm slightly outperforms the former. We investigate the number of Krylov iterations for these methods in a bit more detail in \cref{sec:suitesparse}.

\begin{figure}
\begin{center}
\begin{tabular}{cc}
\includegraphics[width=0.49\textwidth]{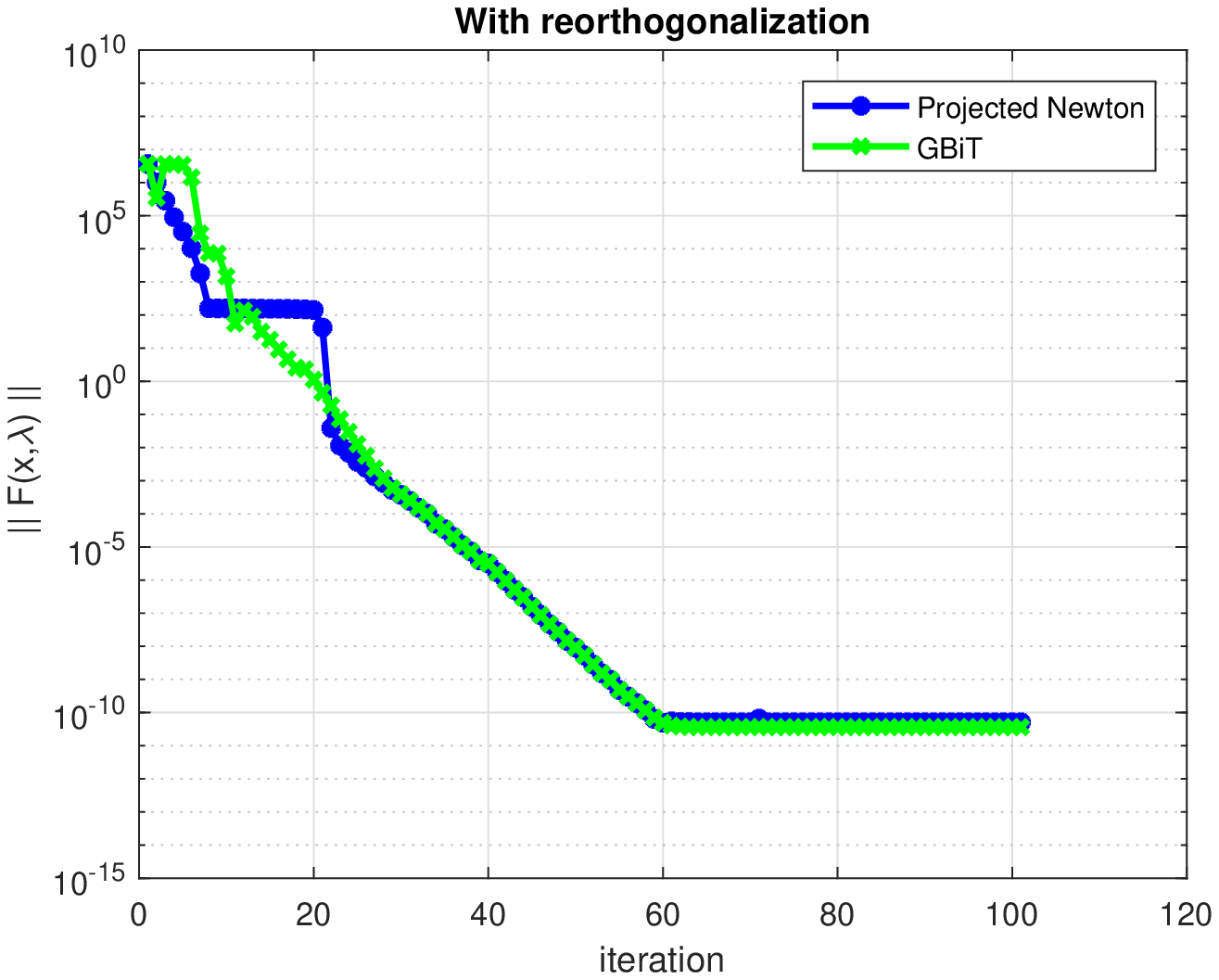} & \hspace{-1cm} \includegraphics[width=0.49\textwidth]{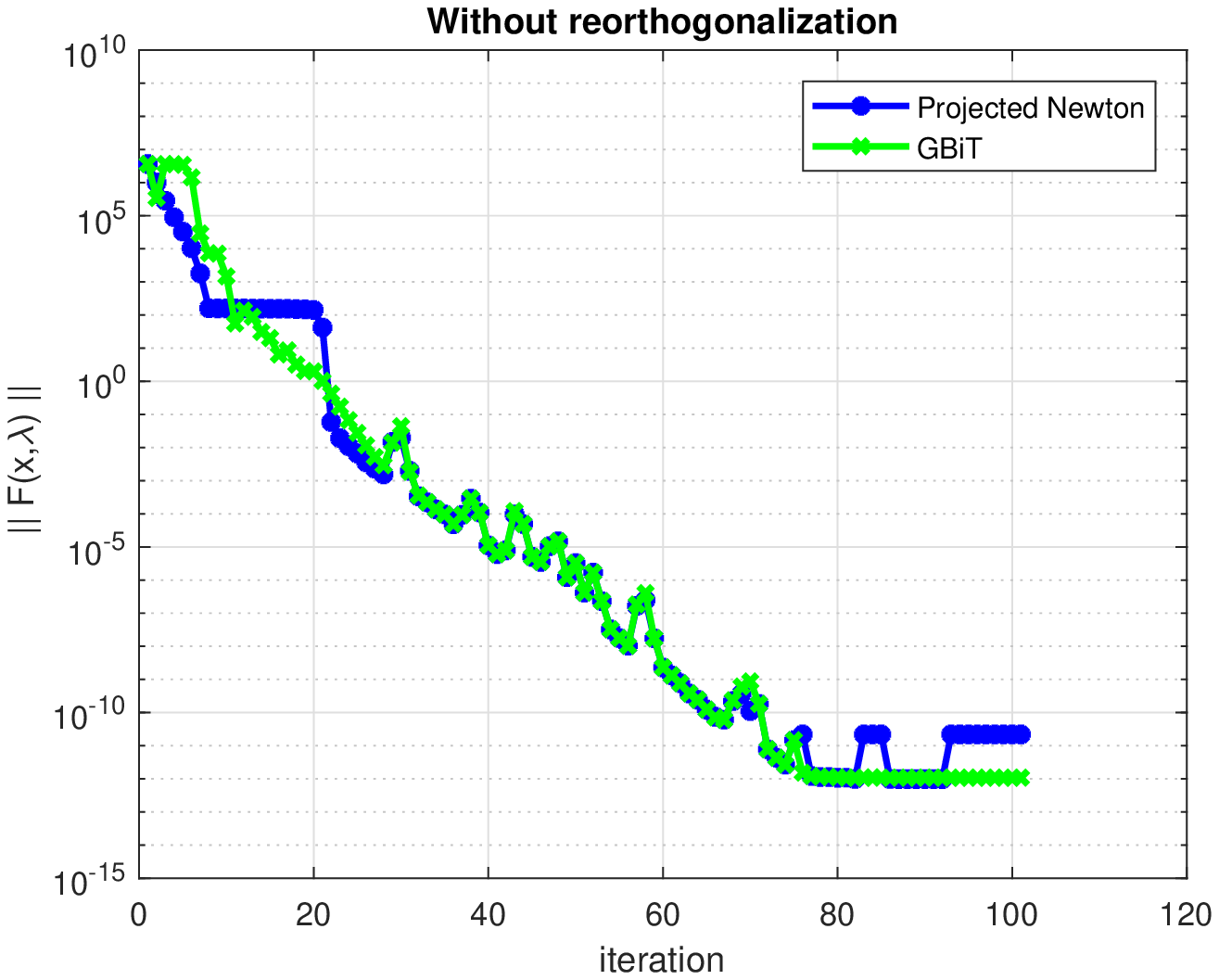} \\
\end{tabular}
\end{center}
\caption{Convergence history of GBiT and Projected Newton method applied to the \texttt{shepp128} test problem. We compare convergence with reorthogonalization (left) and without reorthogonalization (right) of the Krylov subspace bases $V$ and $U$ in each iteration.} \label{fig:reortho}
\end{figure}

It is well known that the Krylov subspace bases $V_k$ and $U_{k+1}$ are not guaranteed to be perfectly orthogonal (i.e. up to machine precision) if we apply the Bidiag1 algorithm in finite precision arithmetic without reorthogonalization \cite{paige1976error,parlett1979lanczos,larsen1998lanczos}.  However, our derivation of the Projected Newton method and proof of convergence heavily rely on the fact that they are. Hence, we are interested in the effect loss of orthogonality of the computed basis vectors has on convergence. To study this effect we solve test problem $\texttt{shepp128}$ using GBiT and Projected Newton with and without reorthogonalization. We use the same parameters as before but set the tolerance well below machine precision to check the attainable accuracy and set the maximum number of iterations to 100. The result is shown in \cref{fig:reortho}.

In the first few iterations the effect of not reorthogonalizing in unnoticeable since at that point the basis vectors are still relatively orthogonal. However, from iteration $28$ onward we can clearly see the difference. The effect is quite similar for GBiT and Projected Newton: while the left plot shows a decreasing series $||F(x_k,\lambda_k)||$, this value increases in some of the iterations on the right plot. Note that this behavior for Projected Newton in the right plot is not possible in exact arithmetic. Indeed, the backtracking line search in the Projected Newton method, see line \ref{lineback} in \cref{alg:PNTM}, ensures that \cref{eq:sufficient_decrease} holds, which means $||F(x_k,\lambda_k)|| < || F(x_{k-1},y_{k-1})||$ for all $k$. However, while this irregular behavior causes a small delay in convergence, it has hardly any effect on the attainable accuracy for this particular experiment. GBiT and Projected Newton with reorthogonalization both reach a tolerance of $10^{-10}$ in $59$ iterations, while it takes them both $66$ iterations when no reorthogonalization is applied. The effect might be more pronounced for other linear inverse problems, but a more thorough analysis of the loss of orthogonality is left as future work.  

\begin{remark}
When the number of Krylov iterations is small the computational overhead of reorthogonalizing the basis vectors is rather limited. However, if we need to perform a lot of Krylov iterations to converge, the additional cost is non-negligible. To reorthogonalize the bases $V_{k+1}$ and $U_{k+1}$ in iteration $k$ we need to calculate $2k$ dot-products, $2k$ multiplications of a vector with a scalar and $2k$ vector additions with vectors of length $n$. This amount to an aditional $\mathcal{O}(8kn)$ floating point operations. We could use more sophisticated techniques like partial reothogonalization to reduce the computation overhead. We refer the reader to \cite{larsen1998lanczos} for more information.  
\end{remark}

\subsection{SuiteSparse Matrix Collection}\label{sec:suitesparse}

\begin{figure}
\begin{center}
\includegraphics[width=1\textwidth]{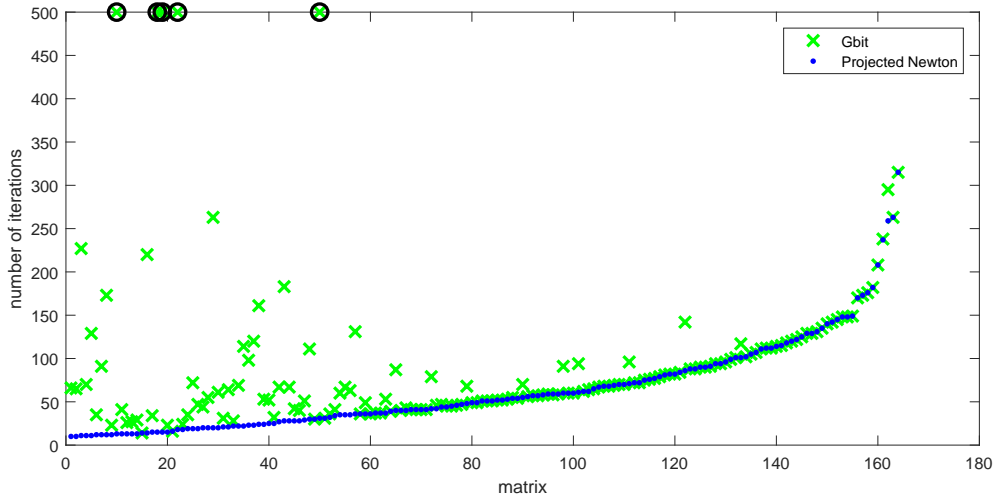}
\caption{Number of iterations (y-axis) needed for GBiT and Projected Newton to converge to the solution of the inverse problem with matrix (x-axis)  from the SuiteSparse Matrix Collection and exact solution $x_{ex, i} = \sin(ih)$ for $h = 2\pi/(n + 1)$. Tolerance for convergence is set to $10^{-8}$ and $10\%$ Gaussian noise is added. The initial regularization parameter is set to  $1/\lambda_0 = \alpha_0 = 10^{-5}$ and the maximum number of iterations is $500$. Black circles indicate when the method did not converge to the desired tolerance within the maximum number of iterations.\label{fig:number_of_its}}
\end{center}
\end{figure}


\begin{figure}
\begin{center}
\begin{tabular}{cc}
\includegraphics[width=0.48\textwidth]{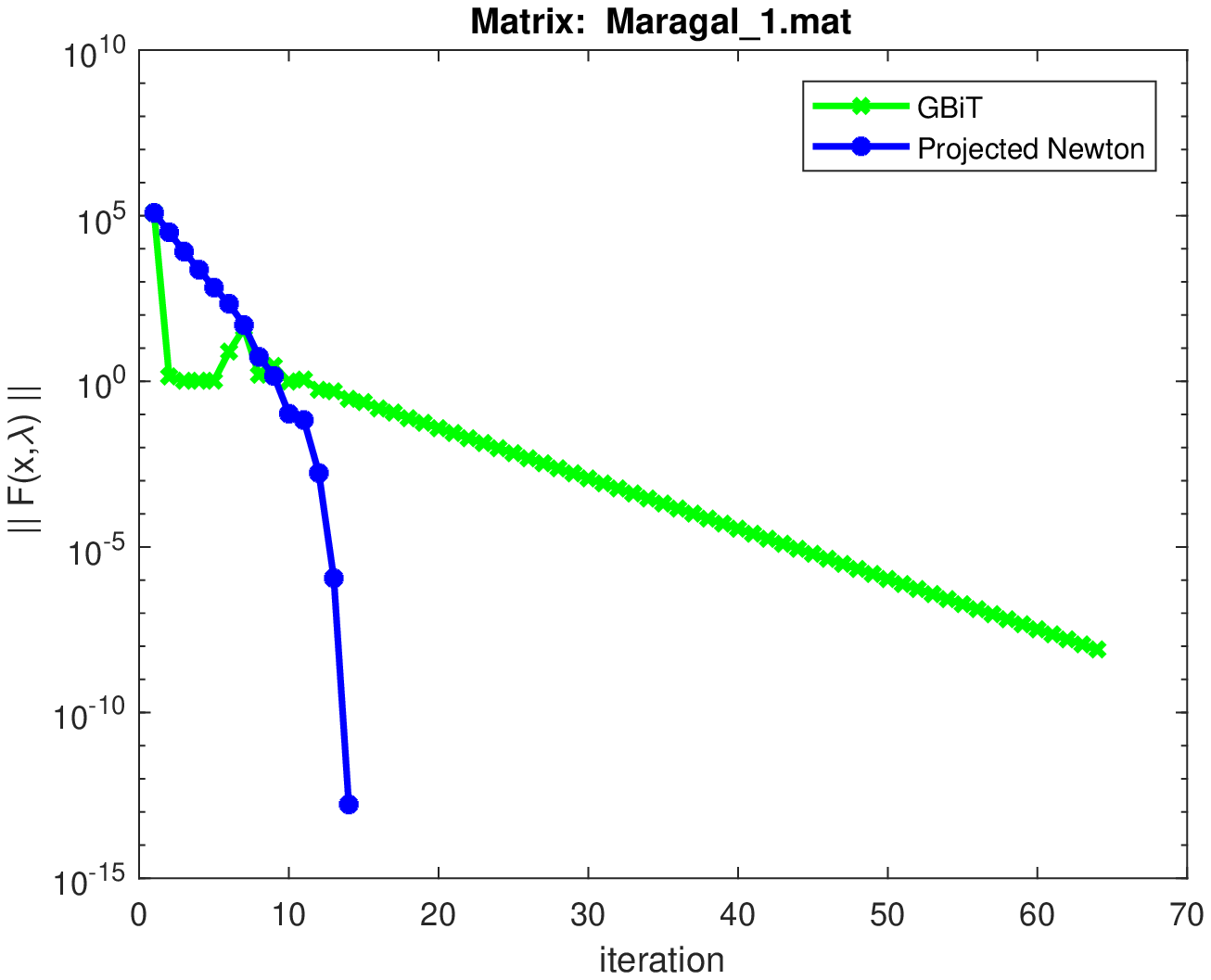} & \hspace{-1cm} \includegraphics[width=0.48\textwidth]{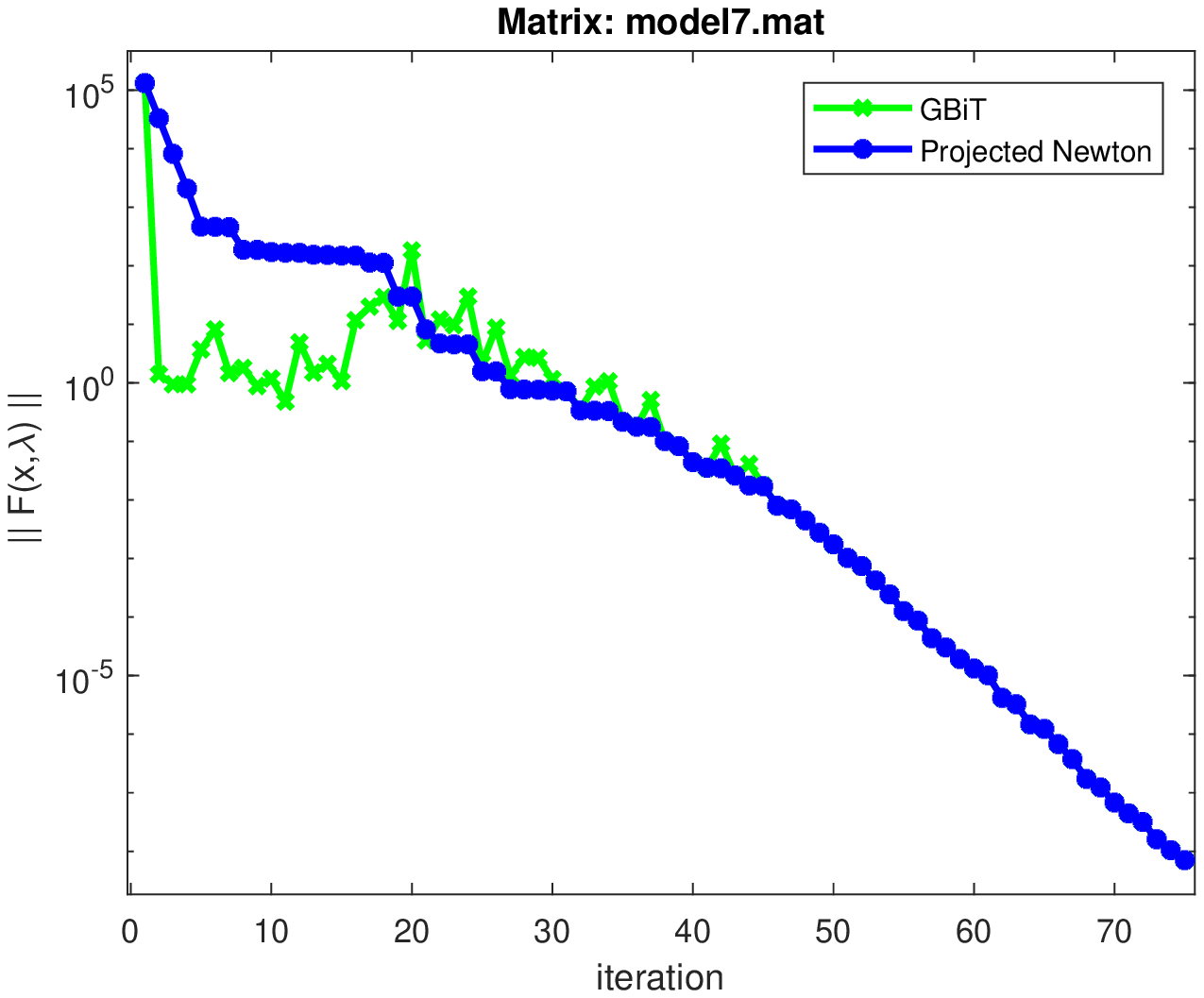} \\
\includegraphics[width=0.48\textwidth]{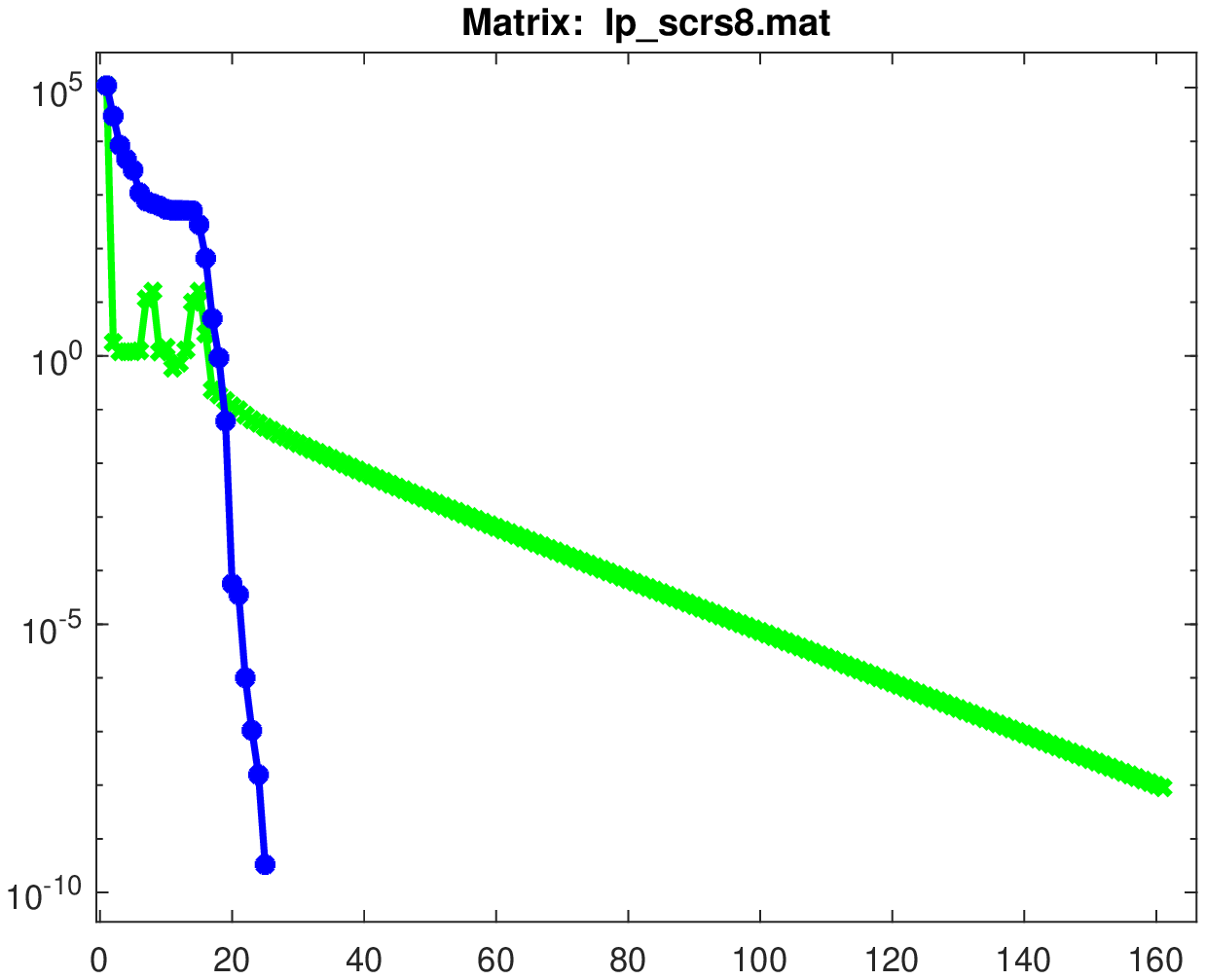} & \hspace{-1cm} \includegraphics[width=0.48\textwidth]{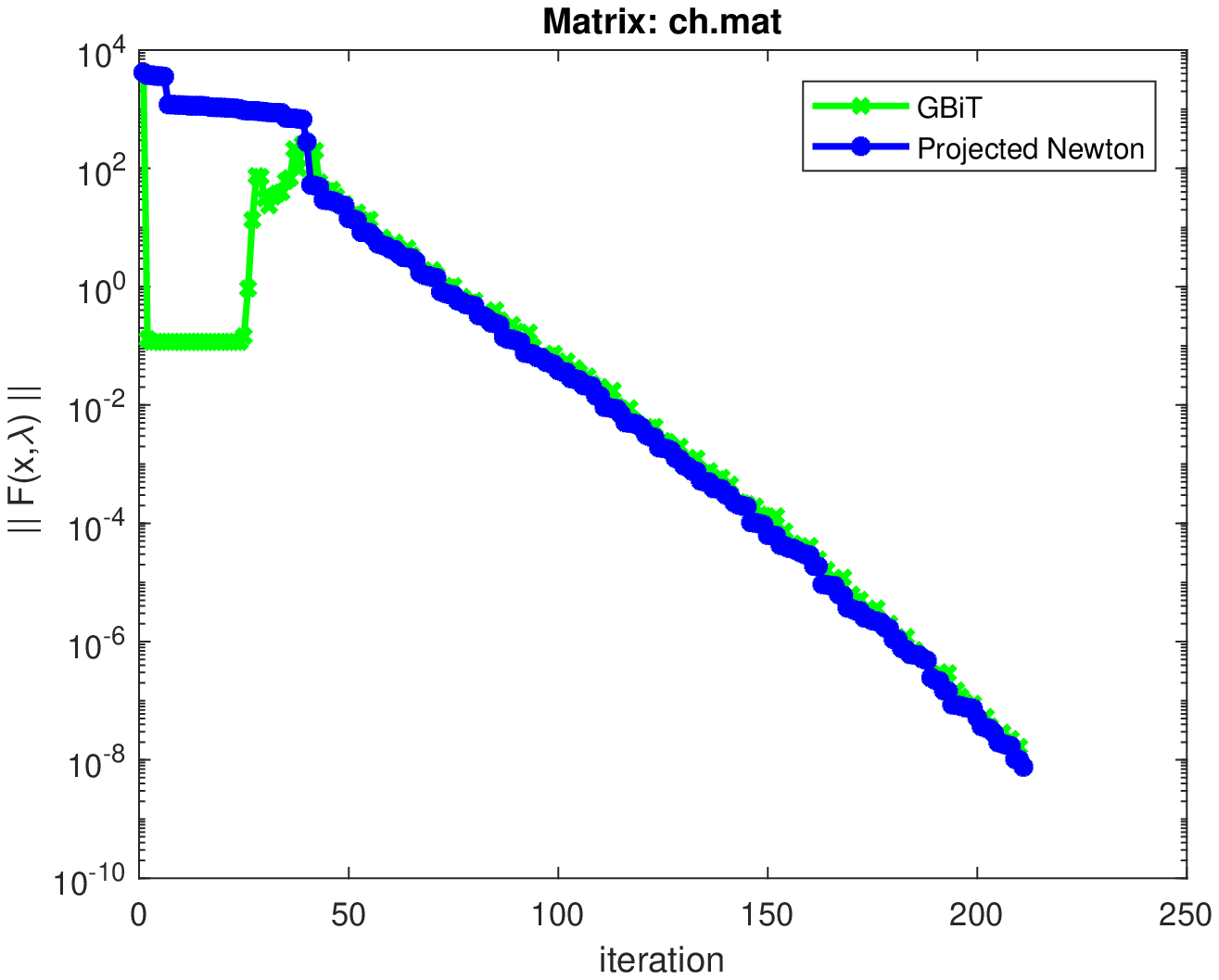}  
\end{tabular}
\end{center}
\caption{Convergence history of GBiT and Projected Newton method applied to four selected matrices from the SuiteSparse Matrix Collection, see \cref{sec:suitesparse}. Left: two examples where Projected Newton significantly outperforms GBiT, due to the quadratic (local) convergence rate of Newton's method. Right: two examples where convergence of GBiT and Projected Newton is very similar, since the convergence rate is determined by the dimension of the Krylov subspace.\label{fig:convergencerate}} 
\end{figure}

As a final experiment we compare the number of Krylov subspace iterations needed for GBiT and Projected Newton to converge. We leave out the Lagrange method in this experiment since it is clearly not competitive with the Krylov subspace based approaches, see \cref{table}. 
We selected all real valued rectangular matrices from the SuiteSparse Matrix Collection \cite{davis2011} with number of rows and columns less than $10,000$, resulting in a total of 164 matrices $A\in\mbbR^{m\times n}$. When $m < n$ we take the transpose of the matrix. Next, we normalize the matrix such that the problem is well-scaled and then we generate a solution vector $x_{ex}\in\mbbR^n$ with entries $x_{ex, i} = \sin(ih)$ for $h = 2\pi/(n + 1)$ and $1\leq i \leq n$, calculate the right-hand side $b_{ex} = Ax_{ex}\in\mbbR^m$ and add $10\%$ Gaussian noise.  We compare the total number of iterations that GBiT and Projected Newton need to converge to the solution with tolerance tol $=10^{-8}$. We use $1/\lambda_0 = \alpha_0 = 10^{-5}$ for all problems and put the maximum number of iterations equal to $500$. Both methods again reorthogonalize the Krylov subspace bases in each iteration. 

The results of the experiment are given by \cref{fig:number_of_its}. We have ordered the $164$ matrices by the number of iterations needed for Projected Newton to converge. First note that the Projected Newton method converges for each of the $164$ matrices within the maximum number of iteration, while GBiT does not converge to the desired tolerance for five of the matrices. Moreover, the number of iterations for Projected Newton to converge for these particular problems is  less than or equal to the number of iterations of GBiT. Note however that for $91$ of the $164$ matrices, which is approximately $55\%$, the number of iterations for both methods is exactly the same. An intuitive explanation for this observation is the following: the rate of convergence of both methods is either limited by the dimension of the Krylov subspace, in which case both methods behave quite similarly, or convergence is determined by the rate of convergence of the root-finder. In the latter case, Projected Newton outperforms GBiT, since Newton's method has a quadratic (local) convergence while the secant method has only a linear rate of convergence. We illustrate this hypothesis with a few representative examples, see \cref{fig:convergencerate}.

\begin{figure}
\begin{center}
\begin{tabular}{cc}
\includegraphics[width=0.49\textwidth]{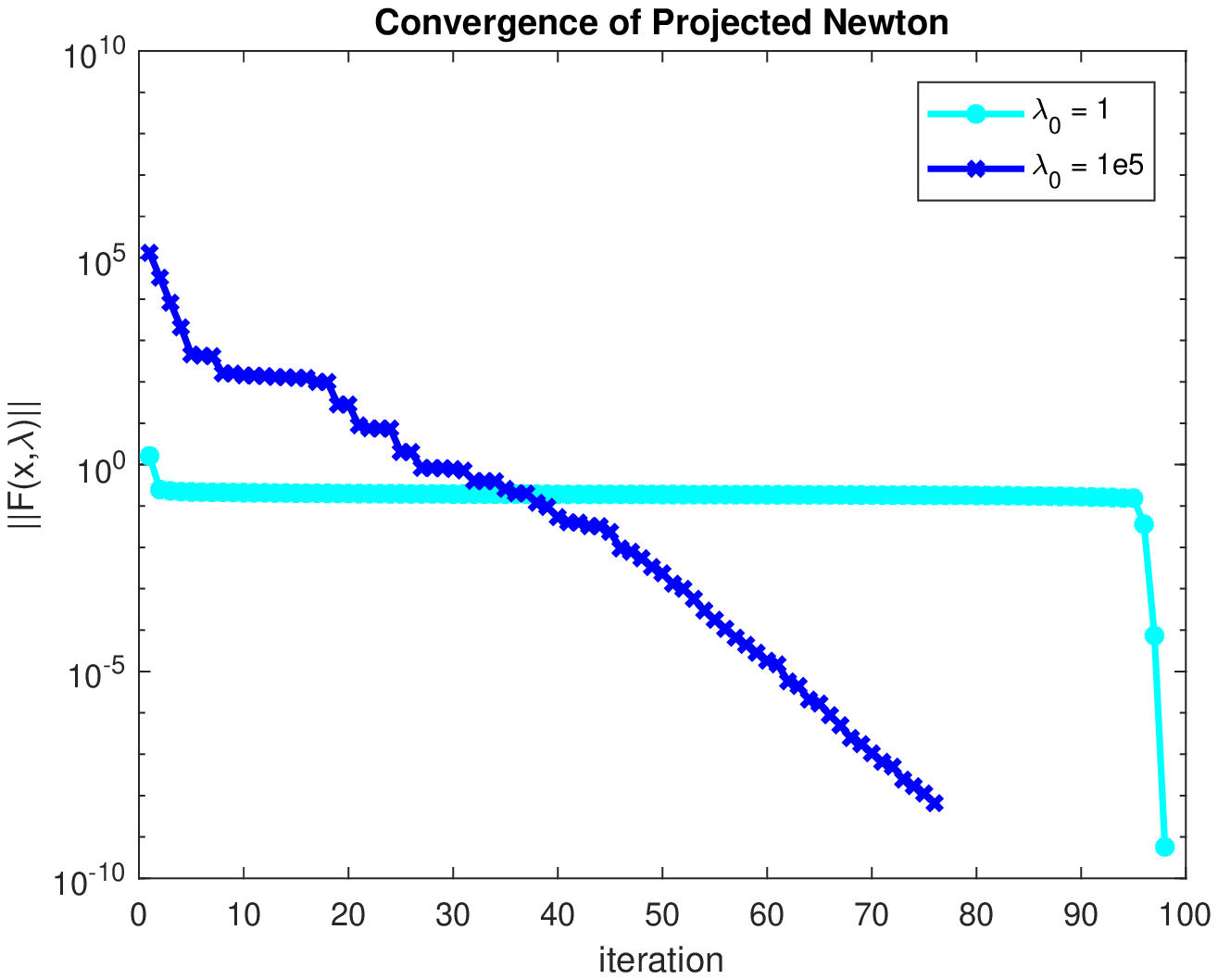} & \includegraphics[width=0.49\textwidth]{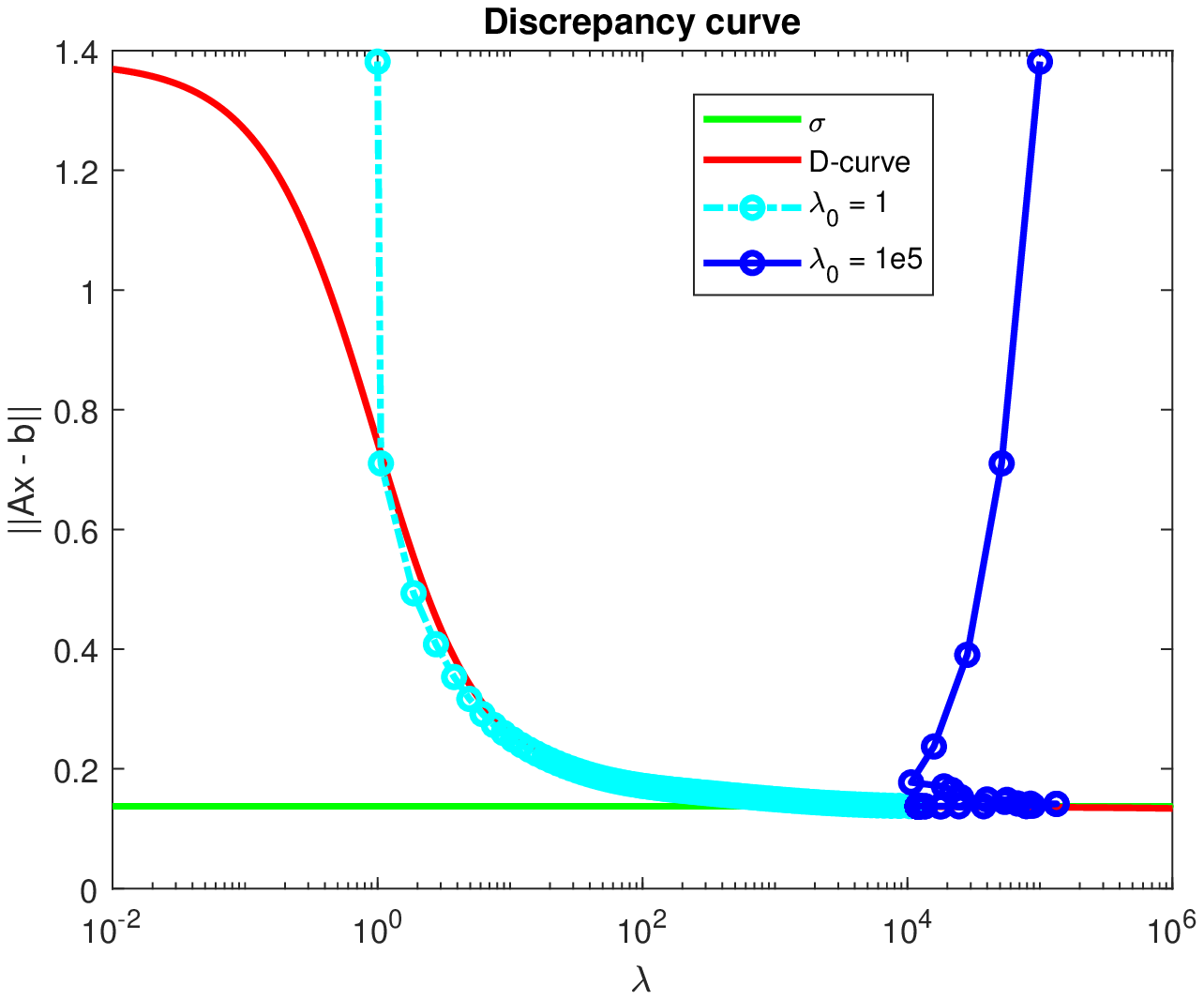} 
\end{tabular}
\end{center}
\caption{(Left) Convergence history of Projected Newton applied to the matrix ``model7'' from the SuiteSparse matrix collection for two different choices of initial regularization parameter $\lambda_0=1$ and $\lambda_0 = 10^5$, see \cref{sec:suitesparse}. (Right) The discrepancy curve $(\lambda,||Ax_\lambda - b||)$ (red) and the value $\sigma$ used in the discrepancy principle (green). The figure shows the values $\lambda_k$ and $||Ax_k - b||$ of the iterates in the Projected Newton method for both choices of initial parameter. \label{fig:dcurve}}
\end{figure}

Lastly, we motivate our choice of initial regularization parameter $\lambda_0 = 10^{5}$ used in the experiments of the current section.
To illustrate the effect this parameter has on the convergence of Projected Newton, we select a specific matrix from the SuiteSparse Matrix collection, namely the matrix named ``model7'' and solve the problem once using $\lambda_0=1$ and again using $\lambda_0=10^{5}$. The other parameters are kept the same as described above. As can be observed from \cref{fig:dcurve}, the behavior is quite different for the two choices: convergence is much slower for $\lambda_0 = 1$ and there is a large number of iterations where the value $||F(x,\lambda)||$ is almost constant.
This can be explained by inspection of the curve $(\lambda,||Ax_\lambda - b||)$, where $x_\lambda$ satisfies $\lambda A^T(Ax_\lambda - b) + x_\lambda = 0$. By considering the transformation of variables $\lambda = 1/\alpha$ as in (\ref{eq:transf}), it is easy to see that this is actually the discrepancy curve, see \cref{fig:curves}. It is well known that convergence of Newton's method is slow when the Jacobian matrix is ill-conditioned \cite{nocedal2006numerical}. If we denote $d(\lambda) = \frac{1}{2}||Ax_\lambda - b||^2$, then it follows from a small calculation that $d'(\lambda) = -(Ax_\lambda - b)^TA(\lambda A^T A + I) A^T(Ax_\lambda - b)$. This value is precisely the Schur complement of the Jacobian matrix \cref{eq:jacf} and is a factor in the expression of its determinant \cref{eq:determinant}. This allows us to intuitively explain the observed behavior. If we choose $\lambda_0=1$, the iterates of the Projected Newton method closely follow the discrepancy curve. Now if the discrepancy curve has points that are close to stationary, as is the case in the example shown in \cref{fig:dcurve}, then $d'(\lambda)$ is close to zero, which is turn means that the Jacobian matrix is ill-conditioned. Hence, we can expect a lower quality of descent direction in that case. Note that we hand-picked this example such that the solution of noise constrained Tikhonov problem \cref{eq:noise_constrained} is a point on the discrepancy curve that is nearly stationary to be able to explain the influence of the initial regularization parameter on convergence. In many of the performed experiments this effect was much less pronounced. However, to assure the best possible performance of the Projected Newton method, we suggest to use a large initial regularization parameter. Note that the initial parameter $\alpha_0$ has hardly any effect on the performance of GBiT. Indeed, in \cite{gazzola2014_1} an experiment is performed where the same problem is solved used different initial regularization parameters and the number of iterations needed to converge was the same for all parameters. 

\begin{remark}
The experiments in the current section mostly deal with the convergence rate of different algorithms for computing the solution of the noise constrained Tikhonov problem \cref{eq:noise_constrained}, which can be seen as combining Tikhonov regularization with the discrepancy principle, but do not study the quality of the solution. Another popular regularization approach is to combine an iterative method, like Conjugate Gradients applied to the Normal Equations (CGNE) \cite{engl1996regularization}, with early stopping, i.e. terminating the algorithm when the residual norm $||Ax_k - b||$ becomes smaller than $\sigma = \eta\epsilon$, see \cref{eq:discrepancy_principle}. It can be expected that CGNE requires fewer iterations than the Projected Newton method, so it would be interesting to study the quality of the solution of both approaches. One clear advantage of combining Tikhonov regularization with the discrepancy compared to CGNE with early stopping is that if the noise-level was slightly underestimated, the latter would still show semi-convergence and the solution would be much worse than the solution obtained with Projected Newton, which does not suffer from semi-convergence. This is of course also the reason that the discrepancy principle uses a safety factor $\eta>1$. Using the noise constrained Tikhonov formulation allows us to use a more accurate estimate of the noise-level $\epsilon$, since we do not have to worry about underestimating this value.
\end{remark}

\section{Conclusions \& outlook} \label{sec:conclusion}
In this work we develop a new algorithm which simultaneously calculates the regularization parameter and corresponding Tikhonov regularized solution of an ill-posed least squares problem such that the discrepancy principle is satisfied. In \cref{sec:newtons_method} we describe how this problem can be characterized as a constrained optimization problem. By projecting the problem onto a low dimensional Krylov subspace using the bidiagonalization procedure, we obtain a projected optimization problem, for which a Newton direction can be calculated very efficiently. We then show that this projected Newton direction produces a descent direction for the original problem. This result allows us to formulate the Projected Newton algorithm for which we prove a global convergence result. We consider some test problems from image deblurring and computed tomography to show the validity of the approach and compare Projected Newton with two other algorithms, namely the Lagrange method from \cite{landi2008lagrange} and the Generalized bidiagonal-Tikhonov method (GBiT), which uses the same bidiagonalization procedure. A first observation we make is that the Lagrange method is not competitive compared to the Krylov subspace based approaches in terms of the number of matrix vector products with $A$ and $A^T$. Next, we compare the number of Krylov subspace iterations needed for GBiT and Projected Newton to converge to a solution with the same tolerance. While in the majority of the experiments reported both methods roughly perform the same number of iterations, the Projected Newton method significantly outperforms GBiT for a large portion of the inverse problems. We hypothesize that this is due to the fact that, when convergence is not determined by the dimension of the Krylov subspace, the quadratic convergence rate of Newton's method beats the linear rate of convergence of the secant method, which is used in GBiT. 

A first possible future research direction is explained in \cref{remark_general}. Since the Projected Newton method in its current form is only able to solve the general form Tikhonov problem, we are interested in how similar ideas can be used to solve the more general regularization problem \cref{eq:general_reg}. Although this work presents a solid theoretical foundation for the algorithm, some interesting research questions remain unanswered. A more formal discussion of the rate of convergence of the Projected Newton method is desirable. Furthermore, finite precision behavior of the algorithm is also something that would benefit from further investigation. More specifically, we are interested in the importance of the reorthogonalization step in the bidiagonalization algorithm, since the proofs we present rely heavily on the fact that we have an orthonormal basis. It is well known that a loss of orthogonality can be observed if we apply the bidiagonalization procedure without reorthogonalization. A small numerical experiment investigating loss or orthogonality is shown in \cref{sec:ct}, but a formal analysis deserves to be treated as future work. Lastly, we use the discrepancy principle to determine a suitable regularization parameter. However, there exist better a posteriori parameter choice methods for Tikhonov regularization based on the noise-level \cite{engl1996regularization}. Hence, it would be interesting to study if we could use similar techniques as the ones presented in this work to efficiently combine Tikhonov regularization with a different parameter choice rule.    

\section*{Acknowledgments}
We would like to acknowledge the Department of Mathematics and Computer Science, University of Antwerp, for financial support. This work was funded in part by the IOF-SBO project entitled ``High performance iterative reconstruction methods for Talbot Lau grating interferometry based phase contrast tomography". The authors are also grateful to the two anonymous referees for providing useful comments which helped significantly improve the manuscript.

\section*{References}
\bibliographystyle{unsrt}
\bibliography{references}

\end{document}